\theoremstyle{plain}
\newtheorem{thm}{Theorem}[section]
\newtheorem{lem}[thm]{Lemma}
\newtheorem{cor}[thm]{Corollary}
\theoremstyle{definition}
\newtheorem{defi}[thm]{Definition}
\newtheorem{rem}[thm]{Remark}
\newtheorem{ex}[thm]{Example}
\newcommand{\ZZ}{\mathbb{Z}}
\newcommand{\RR}{\mathbb{R}}
\newcommand{\CC}{\mathbb{C}}
\newcommand{\PP}{\mathbb{P}}
\newcommand{\ov}[1]{\overline{#1}}
\newcommand{\tl}[1]{\widetilde{#1}}
\newcommand{\Sing}{\mathrm{Sing}}
\newcommand{\Nt}{\mathrm{N}}
\newcommand{\grad}{\mathrm{grad}}
\newcommand{\Mil}{\mathrm{M}}
\newcommand{\Sim}{\mathrm{S}}
\newcommand{\Bif}{\mathrm{B}}
\newcommand{\IR}{\mathrm{I}}
\newcommand{\CF}{\mathrm{C}}
\newcommand{\naiseki}[1]{\left\langle #1\right\rangle}
\newcommand{\wt}[1]{\widetilde{#1}}
\newcommand{\Deg}{\mathrm{deg}}
\title[The bifurcation set of a rational function]
{The bifurcation set of a rational function via Newton polytopes}
\author[Tat Thang Nguyen]{{Tat Thang Nguyen}}
\author[Takahiro SAITO]{{Takahiro SAITO}}
\author[Kiyoshi Takeuchi]{{Kiyoshi Takeuchi}}
\address{Institute of Mathematics, Vietnam 
Academy of Science and Technology, 18 Hoang Quoc Viet Road,
Cau Giay District, Hanoi, Vietnam.}
\email{ntthang@math.ac.vn}
\address{Research Institute for Mathematical Sciences, Kyoto University, Kyoto 606-8502, Japan.}
\email{takahiro@kurims.kyoto-u.ac.jp}
\address{Mathematical Institute, Tohoku University,
Aramaki Aza-Aoba 6-3, Aobaku, Sendai, 980-8578, Japan.}
\email{takemicro@nifty.com}
\subjclass[2010]{14F05, 14F43,
14M25, 32C38, 32S20}
\keywords{Bifurcation values, rational functions, Newton polytopes}
\begin{document}

\maketitle

\begin{abstract}
The bifurcation sets of polynomial functions have been studied 
by many mathematicians from various points of view. 
In particular, N\'emethi and Zaharia described them 
in terms of Newton polytopes. In this paper, 
we will show analogous results for rational functions. 
\end{abstract}


\section{Introduction}\label{introduction}

Let $f(z) \in \CC [z_1, \ldots, z_n]$ be a
polynomial of $n$ ($\geq 2$) variables.
Then for the function $f \colon \CC^n
\longrightarrow \CC$ defined by it
there exists a finite
subset $B \subset \CC$
such that the restriction
\begin{align*}
\CC^n \setminus f^{-1}(B) \longrightarrow
\CC \setminus B
\end{align*}
of $f$ is a $C^{\infty}$ locally trivial fibration.
We denote by $\Bif_f$ the smallest
subset $B \subset \CC$ satisfying this
property. Let ${\rm Sing} f \subset \CC^n$
be the set of the critical points of
$f \colon \CC^n \longrightarrow \CC$. Then by
the definition of $\Bif_f$, obviously we have
\begin{align*}
f( {\rm Sing} f) \subset \Bif_f.
\end{align*}
The elements of $\Bif_f$ are called
bifurcation values of $f$.
The description of the bifurcation set
$\Bif_f \subset \CC$ is a fundamental problem and was
studied by many mathematicians e.g. \cite{Broughton},
\cite{C-D-T-T}, \cite{ChTib}, \cite{H-L}, \cite{H-N}, \cite{Is}, \cite{KOS}, \cite{N-Z1},
\cite{Nguyen}, \cite{P}, \cite{T}, \cite{Ta}, 
\cite{Tibar-book} 
and \cite{Zaharia} etc.
The essential difficulty lies in the
fact that in general $f$ has a
lot of singularities at infinity.
In \cite{N-Z1}, N\'emethi and Zaharia succeeded in
describing $\Bif_{f}$
in terms of the Newton polytope of $f$.
For the generalizations to polynomial maps
$f=(f_1, \ldots, f_k): \CC^n \rightarrow \CC^k$
for $n \geq k \geq 1$,
see \cite{C-D-T-T} and \cite{Nguyen}.
For the generalization to mixed polynomials, see \cite{ChTib}.

In this paper, we will show that analogous results
hold for rational functions.
Let $P(z), Q(z) \in \CC [z_1, \ldots, z_n]$ be
polynomials of $n$ ($\geq 2$) variables.
Assume that they are coprime each other.
Let
\begin{align*}
f(z)= \frac{P(z)}{Q(z)} \qquad
(z \in \CC^n \setminus Q^{-1}(0))
\end{align*}
be the rational function defined by them and
consider the map $f \colon \CC^n \setminus Q^{-1}(0)
\longrightarrow \CC$ associated to it.
Then as in the case of polynomial maps
we can define the bifurcation set
$\Bif_f \subset \CC$ of $f$ such that
$f( {\rm Sing} f) \subset \Bif_f$ (see
\cite{G-L-M}). After the pioneering paper
\cite{G-L-M} of Gusein-Zade, Luengo and
Melle-Hern\'andez, the local and global properties
of rational functions were studied from
various points of view by \cite{B-P},
\cite{B-P-S}, \cite{Thang}, \cite{N-T}
and \cite{Raibaut} etc.

In order to introduce our
main results, from now we prepare some notations.
Let $\Nt(P), \Nt(Q) \subset \RR_{\geq 0}^n$ be the
Newton polytopes of $P, Q$ respectively and
\begin{align*}
\Nt(f):=\Nt(P)+\Nt(Q)
\end{align*}
their Minkowski sum.
Recall that for a vector $u$ in the
dual vector space of $\RR^n$ we can
define its supporting faces in $\Nt(f)$,
$\Nt(P)$ and $\Nt(Q)$ (see Definition \ref{supface}
for the details). Then for a face $\gamma \prec \Nt(f)$
there exist faces $\gamma (P) \prec \Nt(P)$
and $\gamma (Q) \prec \Nt(Q)$ such that
\begin{align*}
\gamma = \gamma (P)+ \gamma (Q)
\end{align*}
(see Section \ref{prelim} for the details.).
We shall say that a face $\gamma \prec \Nt(f)$ is
of type I if it is supported by
a vector $u \in \RR^n \setminus \RR_{\geq 0}^n$
and the affice span
${\rm Aff}( \gamma (P)- \gamma (Q)) \simeq
\RR^{\dim \gamma}$ of the polytope
$\gamma (P)- \gamma (Q) \subset \RR^n$
in $\RR^n$ contains the origin $0 \in \RR^n$.
Clearly, if $Q(z)=1$ and $f(z)=P(z)$ is
a polynomial, this notion corresponds to
that of bad faces of $\Nt(f)= \Nt(P)$ defined by
N\'emethi and Zaharia \cite{N-Z1} (cf. \cite{Saito}, 
\cite{T} and \cite{T-T} for a slightly different one).
We denote the set of faces of $\Nt(f)$ of
type I by $\mathscr{F}_{I}$.
For $\gamma
\in \mathscr{F}_{I}$ by using the Laurent
polynomials $P_{\gamma (P)}(z)$ and
$Q_{\gamma (Q)}(z)$ on the torus
$T=( \CC^*)^n$ we define a function
$f_{\gamma}: T \setminus Q_{\gamma (Q)}^{-1}(0)
\longrightarrow \CC$ by
\begin{align*}
f_{\gamma}(z)=
\frac{P_{\gamma (P)}(z)}{Q_{\gamma (Q)}(z)}
\qquad
(z \in T \setminus Q_{\gamma (Q)}^{-1}(0))
\end{align*}
Then our main result is as follows.

\begin{thm}\label{MTM}
Assume that the divisor $P^{-1}(0)
\cup Q^{-1}(0) \subset \CC^n$ is
normal crossing in a neighborhood of
$P^{-1}(0) \cap Q^{-1}(0)$ and $f(z)= \frac{P(z)}{Q(z)}$ is
non-degenerate (see Definition \ref{nondeg}).
Then we have
\begin{align}\label{th11inc}
\Bif_f \subset f( {\rm Sing} f) \cup \{ 0 \}
\cup \Bigl(
\bigcup_{\gamma \in \mathscr{F}_{I}}
f_{\gamma}( {\rm Sing} f_{\gamma})
\Bigr).
\end{align}
\end{thm}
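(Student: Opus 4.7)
The plan is to verify the contrapositive of \eqref{th11inc}: for any $c \in \CC$ outside the right-hand side, we show that $f$ defines a $C^{\infty}$ locally trivial fibration over a sufficiently small disc $D \subset \CC$ around $c$, hence $c \notin \Bif_f$. The basic geometric device is a smooth toric compactification $X_\Sigma \supset \CC^n$ whose defining fan $\Sigma$ has support $\RR^n$ and subdivides the common refinement of the normal fans of $\Nt(P)$, $\Nt(Q)$ and $\Nt(f)$. On $X_\Sigma$ the rational function $f = P/Q$ extends to a meromorphic map to $\PP^1$; using the normal crossing hypothesis on $P^{-1}(0) \cup Q^{-1}(0)$ near their intersection, a sequence of admissible blow-ups resolves the remaining indeterminacy and yields a smooth projective variety $Y$ equipped with a proper extension $\tl f \colon Y \to \PP^1$ of $f$.

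Once $\tl f$ is constructed, the strategy is standard: by Ehresmann's theorem, $f$ is locally trivial over $D$ provided that no stratum of the natural Whitney stratification of $Y \setminus (\CC^n \setminus Q^{-1}(0))$ contains a critical point of $\tl f$ lying over $D$. This stratification is indexed combinatorially by the toric orbits $O_\sigma$ of $X_\Sigma$ (one for each cone $\sigma \in \Sigma$ with at least one generator in $\RR^n \setminus \RR_{\geq 0}^n$) together with the exceptional divisors produced by resolving the base locus of $P/Q$. On a standard affine chart corresponding to a cone $\sigma$ with dual face $\gamma = \gamma(P) + \gamma(Q) \prec \Nt(f)$, the function $f$ factors as $z^{\alpha} \cdot u(z)/v(z)$, where the exponent $\alpha$ is determined by the Minkowski decomposition of $\gamma$ and $u,v$ restrict on $O_\sigma$ to expressions essentially built from $P_{\gamma(P)}$ and $Q_{\gamma(Q)}$.

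Analysis of the possible critical points of $\tl f$ on $O_\sigma$ splits into three cases, matching the three terms on the right-hand side of \eqref{th11inc}. For $\sigma$ lying in the fan of $\RR_{\geq 0}^n$, the stratum is contained in $\CC^n$ and the assumption $c \notin f(\Sing f)$, combined with non-degeneracy, excludes critical points. For $\sigma$ with a supporting vector $u \notin \RR_{\geq 0}^n$ and $\gamma$ \emph{not} of type I, the condition $0 \notin \Aff(\gamma(P) - \gamma(Q))$ forces $\alpha$ to be non-trivial in the direction of $u$, so $f$ tends to $0$ or $\infty$ along $O_\sigma$; shrinking $D$ away from $0$ then avoids the stratum entirely. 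Finally, when $\gamma$ is of type I, the restriction of $\tl f$ to $O_\sigma$ identifies with $f_\gamma$, and the hypothesis $c \notin f_\gamma(\Sing f_\gamma)$ together with non-degeneracy again rules out critical points over $D$. Along the exceptional divisors from the indeterminacy resolution, the normal crossing hypothesis puts $\tl f$ in the local form of a product of powers of coordinates times a unit, which is automatically a submersion on the locus where $\tl f \notin \{0,\infty\}$.

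The main technical obstacle is producing the monomial-times-unit local form of $\tl f$ uniformly on all toric charts and resolution charts, and then showing that non-degeneracy excludes critical points on every boundary stratum other than those of type I with $c \in f_\gamma(\Sing f_\gamma)$. A subtler point is the treatment of lower-dimensional cones $\sigma$: here one must pass from $f$ itself to appropriate partial leading terms, and the compatibility between $\gamma(P)$, $\gamma(Q)$ and $\gamma = \gamma(P) + \gamma(Q)$ noted just before the theorem is essential for keeping the separate contributions of $P$ and $Q$ under control throughout the local calculation.
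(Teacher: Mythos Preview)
Your approach is genuinely different from the paper's and, in broad outline, is the sort of compactification-and-stratified-Ehresmann argument used by Zaharia and in Section~4 of the present paper for the \emph{converse} inclusion. The paper itself proves Theorem~\ref{MTM} in two independent steps with no toric compactification at all: first, under the normal crossing hypothesis alone, it shows (Theorem~\ref{NST}) that $\Bif_f \subset f(\Sing f)\cup \Sim_f$, where $\Sim_f$ is the set of limits $\lim f(z^k)$ along sequences $z^k\to\infty$ in the Milnor set $\Mil_f=\{z:\grad f(z)=\lambda z\}$; this is done by building controlled vector fields on a blow-up of $\CC^n$ along $P^{-1}(0)\cap Q^{-1}(0)$. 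Second, under non-degeneracy alone, it shows (Theorem~\ref{main2}) that $\Sim_f\subset\{0\}\cup\bigcup_{\gamma\in\Fone}f_\gamma(\Sing f_\gamma)$ via the curve selection lemma: one expands $P,Q,f$ along an analytic arc $h(t)\to\infty$ in $\Mil_f$, uses the identity $\ov{Q}\,\grad P-\ov{P}\,\grad Q=\ov{Q^2}\,\grad f=\ov{Q^2}\lambda h$ and Euler's relation for quasi-homogeneous polynomials to force a key equality (Lemma~\ref{NZlem5}), and then runs a four-case analysis according to whether $P_{\gamma(P)}(w^0)$ and $Q_{\gamma(Q)}(w^0)$ vanish. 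Your route, if completed, would give a more global geometric picture; the paper's route is entirely elementary and never leaves $\CC^n$.

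That said, your sketch has a real gap at infinity. You invoke the normal crossing hypothesis to resolve the indeterminacy of $P/Q$, but that hypothesis only controls $P^{-1}(0)\cap Q^{-1}(0)$ \emph{inside $\CC^n$}. On the toric boundary the closures $\overline{P^{-1}(0)}$ and $\overline{Q^{-1}(0)}$ meet again (on any orbit $O_\sigma$ at points where $P_{\gamma(P)}(w)=Q_{\gamma(Q)}(w)=0$), and it is precisely the second clause of Definition~\ref{nondeg} (linear independence of $\grad P_{\gamma(P)}$ and $\grad Q_{\gamma(Q)}$ on that locus) that is needed to make the blow-ups there tractable and to get your ``monomial times unit'' normal form on the resulting exceptional strata. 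Your case analysis does not touch these points: in case~3 you say $\tl f|_{O_\sigma}$ ``identifies with $f_\gamma$'', but $f_\gamma$ is undefined where $Q_{\gamma(Q)}=0$, and in case~2 you assert $f\to 0$ or $\infty$ along $O_\sigma$, which fails exactly where two rays of $\sigma$ carry opposite signs of $d_f^u$. Until you explain how non-degeneracy controls the resolution of indeterminacy \emph{at infinity} and the resulting strata, the Ehresmann step is not justified.
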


Note that the first assumption of this theorem is
satisfied by generic polynomials $P(x)$ and
$Q(x)$ such that $P(0) \not= 0$ and
$Q(0) \not= 0$. Moreover, in the two
dimensional case $n=2$ the same is true
also for generic $P(z)$ and $Q(z)$.
For $n \geq 2$, if the intersection
of $\Nt(Q)$ and each coordinate axis of $\RR^n$
is equal to $\{ 0 \} \subset \RR^n$ then
the the first assumption of Theorem \ref{MTM}
is satisfied by generic $P(z)$ and $Q(z)$.
Indeed, for such $Q(z)$ we have
\begin{align*}
Q^{-1}(0) \subset T=( \CC^*)^n \subset \CC^n.
\end{align*}
This is the case when $Q(z)=1$ and $f(z)=P(z)$
is a polynomial. If $f(z)=P(z)$ is non-degenerate
(at infinity) and convenient, by a result of
Broughton \cite{Broughton}
the polynomial map $f: \CC^n \rightarrow \CC$
is tame at infinity and
\begin{align*}
\Bif_f = f( {\rm Sing} f).
\end{align*}
However, for rational functions $f(z)= \frac{P(z)}{Q(z)}$,
by Theorem \ref{MTM} and the analogues of
the results in \cite{T} and \cite{Zaharia}
for rational functions (which can be proved
by toric compactifications of $\CC^n$), 
even if $P(z)$ and $Q(z)$ are convenient
there might be some type I faces of $\Nt(f)$ and
hence we do not have the equality
$\Bif_f = f( {\rm Sing} f)$ in general. 
See Section \ref{2-dim case} for the details. 
As in Gusein-Zade, Luengo
and Melle-Hern\'andez
\cite{G-L-M}, our non-degeneracy condition
in Definition \ref{nondeg} is inspired from the
classical one for polynomial functions over
complete intersection subvarieties in $\CC^n$
used by many authors such as
\cite{M-T-1} and \cite{Oka} etc. For the
proof of Theorem \ref{MTM} we also need to
refine the methods of N\'emethi and Zaharia in
\cite{N-Z1}. Finally, note that the monodromies
of rational functions over $\CC^n$ were
studied by \cite{G-L-M} and \cite{N-T}.

\section{Preliminary notions and results}\label{prelim}

Let $P(z), Q(z)\in \CC[z_{1},\dots,z_{n}]$ be 
polynomials of $n(\geq 2)$-variables with coefficients in $\CC$.
We define a rational function $f(z)$ by
\[f(z)=\dfrac{P(z)}{Q(z)} 
\qquad 
(z \in \CC^n \setminus Q^{-1}(0)). \] 
We will study the map from 
$\CC^n\setminus Q^{-1}(0)$ to $\CC$ defined by $f$. 
Let us set $\IR(f)=P^{-1}(0) \cap Q^{-1}(0) \subset \CC^n$. 
If $P$ and $Q$ are coprime, then 
$\IR(f)$ is nothing but the set of the indeterminacy 
points of $f$. In fact, 
the set $\IR(f)$ depends on the pair $(P(z), Q(z))$ of 
polynomials representing $f(z)$. 
For example, if we take a non-zero polynomial 
$R(z)$ on $\CC^n$ and set 
\begin{align*}
g(z)= \frac{P(z)R(z)}{Q(z)R(z)}  \qquad (z \in \CC^n), 
\end{align*}
then the set $\IR(g)=\IR(f) \cup R^{-1}(0)$ might be 
bigger than $\IR(f)$. In this way, we 
distinguish $f(z)= \frac{P(z)}{Q(z)}$ from 
$g(z)= \frac{P(z)R(z)}{Q(z)R(z)}$ even if 
their values coincide over an open dense 
subset of $\CC^n$. This is the convention due to 
Gusein-Zade, Luengo and Melle-Hern\'andez 
\cite{G-L-M} etc. Hereafter, we assume that 
$P(z)$ and $Q(z)$ are coprime. 

\begin{defi}
We say that $c\in \CC$ is an atypical 
value of $f$ if for any open neighborhood $U$ of $c$, 
the restriction $f^{-1}(U)\cap (\CC^n\setminus Q^{-1}(0))\to U$ of $f$
is not a $C^{\infty}$ trivial fibration. 
The bifurcation set $\Bif_{f}\subset \CC$ is 
the set of all the atypical values of $f$. 
\end{defi}

For a polynomial or rational function $g$ on $\CC^n$ as in \cite{Milnor},
we set 
\begin{align*}
\grad{g}(z):=\left(
\overline{\dfrac{\partial g}{\partial z_{1}}(z)},
\ldots, \ov{\dfrac{\partial g}{\partial z_{n}}(z)}\right), 
\end{align*}
where $\overline{a}$ is the complex conjugate of $a\in \CC$. 
For $z=(z_{1},\dots,z_{n}), w=(w_{1},\dots,w_{n})\in \CC^n$, 
$\langle z,w\rangle$ stands for the Hermite 
inner product of $z$ and $w$, i.e. 
$\langle z,w\rangle=\sum_{i=1}^{n}z_{i}\ov{w_{i}}$. 
Moreover, for $z \in \CC^n$ 
we set $\begin{Vmatrix}z\end{Vmatrix}:=\sqrt{\langle z,z\rangle}
\in \RR_{\geq 0}$.

\begin{defi}\label{defmf}
\begin{enumerate}
\item We define a subset $\Mil_f\subset \CC^n$
by
\begin{align*}
{\Mil_f:=\{z\in \CC^n\setminus Q^{-1}(0)\ |\ 
\mbox{there exists $\lambda\in \CC$ such that $\grad{f} (z)=\lambda z$}\}}
\end{align*}
\item We define a subset $\Sim_{f}\subset \CC$ by 
\begin{align*}
\Sim_f:=\left\{ s_0 \in \CC\ \middle|\ 
\begin{array}{l}
\mbox{there exists a sequence 
$\{z^{k}\}_{k=0}^{\infty}\subset \Mil_f$ such that}\\
\mbox{$\lim_{k\to \infty}\begin{Vmatrix}z^{k}
\end{Vmatrix}=\infty$ and $\lim_{k\to \infty}f(z^k)= s_0$.}
\end{array}
\right\}
\end{align*}
\end{enumerate}
\end{defi}
\begin{defi}
\begin{enumerate}
\item
Let $g(z)=\sum_{\alpha\in \ZZ^n}
a_{\alpha}z^{\alpha}\in \CC[z_{1}^{\pm},\dots,z_{n}^{\pm}]$ 
be a Laurent polynomial with coefficients in $\CC$.
Then the Newton polytope $\Nt(g)\subset \RR^n$ of $g$ 
is the convex full of the set $\mathrm{supp}(f)
:=\{\alpha\in \ZZ^n\ |\ a_{\alpha}\neq 0\}$ in $\RR^n$.
\item
Let $P(z), Q(z)\in \CC[z_{1},\dots,z_{n}]$ be 
polynomials and $f(z)$ the rational function 
$\frac{P(z)}{Q(z)}$ defined by them on $\CC^n$. 
Then the Newton polytope $\Nt(f)\subset \RR^n$ 
of $f$ is the Minkowski sum of $\Nt(P)$ and $\Nt(Q)$. 
Namely we set 
\[\Nt(f):=\{x+y\in \RR^n\ |\ x\in \Nt(P),\ y\in \Nt(Q)\}.\]
\end{enumerate}
\end{defi}

For real vectors $u=(u_{1},\dots,u_{n}), 
v=(v_{1},\dots,v_{n})\in \RR^n$,
we set $\langle u,v\rangle:=\sum_{i=1}^{n}u_{i}v_{i}$.

\begin{defi}\label{supface}
\begin{enumerate}
\item
Let $S$ be a polytope in $\RR^n$.
For a vector $u\in \RR^n$,
we set $d^{u}_{S}:=
\mathrm{min}_{w\in S}\langle u,w\rangle\in \RR$.
Moreover, 
for a real vector $u\in \RR^n$,
the supporting face $\gamma^{u}_{S}$ of $S$ by $u$ 
is a polytope defined by 
\[\gamma^{u}_{S}:=\{v\in S\ |\ 
\langle u,v\rangle = 
\mathrm{min}_{w\in S}\langle u,w\rangle
\}.\]
\item
For a Laurent polynomial 
$g(z)\in \CC[z_{1}^{\pm},\dots,z_{n}^{\pm}]$ and 
 a real vector $u\in \RR^n$ 
we set $d^{u}_{g}:=d^{u}_{\Nt(g)}$ and 
$\gamma^{u}_{g}:=\gamma^{u}_{\Nt(g)}$.
\item For a rational function $f(z)=\frac{P(z)}{Q(z)}$ on $\CC^n$ 
and a real vector $u\in \RR^n$, 
we set $d^{u}_{f}:=d^{u}_{P}-d^{u}_{Q}$ and 
$\gamma^{u}_{f}:=\gamma^{u}_{\Nt(f)}$.
\end{enumerate}
\end{defi}

\begin{defi}
Let $\frac{P(z)}{Q(z)}$ be a rational function on $\CC^n$.
\begin{enumerate}
\item
We say that a face $\gamma\prec \Nt(f)$ of $\Nt(f)$ is of type~I
if there exists a vector $u\in \RR^n\setminus 
\RR^{n}_{\geq 0}$ such that
$\gamma^{u}_{f}=\gamma$
and for any such $u$ we have $d_{f}^{u}(=d^{u}_{P}-d^{u}_{Q})=0$.
We denote the set of all the type~I faces of 
$\Nt(f)$ by $\mathscr{F}_{\mathrm{I}}$.
\item
We say that a face $\gamma\prec \Nt(f)$ of 
$\Nt(f)$ is of type~II, if it is not of type~I but there 
exists $u\in \RR^n\setminus \RR^n_{\geq 0}$ such that 
$\gamma^{u}_{f}=\gamma$. We denote the set of all the type~II faces of 
$\Nt(f)$ by $\mathscr{F}_{\mathrm{II}}$. 
\end{enumerate}
\end{defi}

For a Laurent polynomial $g(z)=
\sum_{\alpha\in \ZZ^n}a_{\alpha}z^{\alpha} 
\in \CC[z_{1}^{\pm},\dots,z_{n}^{\pm}]$ 
and a face $\gamma\prec \Nt(g)$, 
we set $g_{\gamma}(z):=\sum_{\alpha\in \gamma}a_{\alpha}z^{\alpha}$. 
We regard it as a function on $T=( \CC^*)^n$. 
Let $f(z)=\frac{P(z)}{Q(z)}$ be a rational function. Then for 
a face $\gamma\prec \Nt(f)$ and 
a real vector $u\in \RR^n$ such that 
$\gamma_f^u= \gamma$, the faces 
$\gamma_P^u \prec \Nt(P)$ and $\gamma_Q^u \prec \Nt(Q)$ 
do not depend on $u$. By taking such $u$ we set 
\begin{align*}
\gamma(P)= \gamma_P^u, \qquad 
\gamma(Q)= \gamma_Q^u. 
\end{align*}
Then we have 
\begin{align*}
\gamma = \gamma (P)+ \gamma (Q).
\end{align*}
Let 
${\rm Aff}( \gamma (P)- \gamma (Q)) \simeq
\RR^{\dim \gamma}$ be the affice span 
of the polytope 
$\gamma (P)- \gamma (Q) \subset \RR^n$
in $\RR^n$. Then the face 
$\gamma\prec \Nt(f)$ is of type~I 
iff it is supported by a vector $u\in \RR^n\setminus 
\RR^{n}_{\geq 0}$ and 
\begin{align*}
0 \in {\rm Aff}( \gamma (P)- \gamma (Q)). 
\end{align*}

\begin{ex}
Let the Newton polytopes of $P(z)$ and $Q(z)$ 
be as in Figures~\ref{fig1} and \ref{fig2}.
In this case, $\Nt(f)$ is a polytope as in Figure~\ref{fig3}. 
Then, the lines $\ov{OA}$, $\ov{OD}$ and $\ov{AB}$ and the points $O$ and $A$ are of type~I,
and the other faces of $\Nt(f)$ are of type~II.

\begin{figure}[htbp]
\begin{tabular}{ccc}
\begin{minipage}{0.3\hsize}
\begin{center}
\begin{tikzpicture}[scale = 0.5]
\draw[help lines] (-1,-1) grid (6,5);
\draw[fill,black, opacity=.5] (0,0) --(2,3)--(5,3)--(0,0)--cycle;
\draw[->,thick] (0,-1)--(0,5);
\draw[->,thick] (-1,0)--(6,0);
\draw (2.5,2) node{$\Nt(P)$};

\fill[black] (0,0) circle (0.1);
\fill[black] (2,3) circle (0.1);
\fill[black] (5,3) circle (0.1);

\end{tikzpicture}
\caption{$\Nt(P)$}\label{fig1}
\end{center}
\end{minipage}&
\begin{minipage}{0.3\hsize}
\begin{center}
\begin{tikzpicture}[scale = 0.5] 
\draw[help lines] (-1,-1) grid (6,5);
\draw[fill,black, opacity=.5] (0,0) --(2,3)--(4,1)--(0,0)--cycle;
\draw[->,thick] (0,-1)--(0,5);
\draw[->,thick] (-1,0)--(6,0);
\draw (2.2,1.5) node{$\Nt(Q)$};

\fill[black] (0,0) circle (0.1);
\fill[black] (2,3) circle (0.1);
\fill[black] (4,1) circle (0.1);

\end{tikzpicture}
\caption{$\Nt(Q)$}\label{fig2}
\end{center}
\end{minipage}&
\begin{minipage}{0.3\hsize}
\begin{center}
\begin{tikzpicture}[scale = 0.3] 
\draw[help lines] (-1,-1) grid (10,7);
\draw[fill,black, opacity=.5] (0,0) --(4,6)--(7,6)--(9,4)--(4,1)--(0,0)--cycle;
\draw[->,thick] (0,-1)--(0,7);
\draw[->,thick] (-1,0)--(10,0);
\draw (0,0) node[below left]{$O$};
\draw (4,6) node[above]{$A$};
\draw (7,6) node[above]{$B$};
\draw (9,4) node[right]{$C$};
\draw (4,1) node[below]{$D$};

\fill[black] (0,0) circle (0.1);
\fill[black] (4,6) circle (0.1);
\fill[black] (7,6) circle (0.1);
\fill[black] (9,4) circle (0.1);
\fill[black] (4,1) circle (0.1);

\draw (4.6,3) node{$\Nt(f)$};

\end{tikzpicture}
\caption{$\Nt(f)$}\label{fig3}
\end{center}
\end{minipage}
\end{tabular}
\end{figure}

\end{ex}

For a face $\gamma\prec \Nt(f)$ ($\gamma \not= \Nt(f)$) 
by using the Laurent
polynomials $P_{\gamma (P)}(z)$ and
$Q_{\gamma (Q)}(z)$ on the torus
$T=( \CC^*)^n$ we define a function
$f_{\gamma}: T \setminus Q_{\gamma (Q)}^{-1}(0)
\longrightarrow \CC$ by
\begin{align*}
f_{\gamma}(z)=
\frac{P_{\gamma (P)}(z)}{Q_{\gamma (Q)}(z)} 
\qquad 
(z \in T \setminus Q_{\gamma (Q)}^{-1}(0))
\end{align*}

\begin{defi}\label{nondeg}
Let $f(z)=\frac{P(z)}{Q(z)}$ be a rational function on $\CC^n$. 
Then we say that $f$ is non-degenerate if 
$\grad P_{\gamma(P)}(z)$ (resp. $\grad Q_{\gamma(Q)}(z)$) 
does not vanish on $P^{-1}_{\gamma(P)}(0)\setminus Q^{-1}_{\gamma(Q)}(0) 
\subset T$ (resp. $Q^{-1}_{\gamma(Q)}(0)\setminus P^{-1}_{\gamma(P)}(0) 
\subset T$) 
for any face $\gamma\prec \Nt(f)$ of type~II,
and the two vectors $\grad P_{\gamma(P)}(z)$ and 
$\grad Q_{\gamma(Q)}(z)$ are linearly independent 
on $P^{-1}_{\gamma(P)}(0)\cap Q^{-1}_{\gamma(Q)}(0) \subset T$
for any face $\gamma\prec \Nt(f)$ of type~I or II.
\end{defi}

\begin{lem}\label{type2sing}
Let $f(z)=\frac{P(z)}{Q(z)}$ be a rational function.
Assume that a face $\gamma\prec \Nt(f)$ is of type~II. 
Then we have $f_{\gamma}( \Sing{f_{\gamma}})\subset \{0\}$.
If moreover $f$ is non-degenerate in the sense of Definition~\ref{nondeg},
we have $f_{\gamma}( \Sing{f_{\gamma}})=\emptyset$.
\end{lem}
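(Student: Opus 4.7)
The plan is to exploit the quasi-homogeneity of $f_\gamma$ with respect to the $\CC^*$-action determined by a vector $u$ supporting $\gamma$. Fix $u \in \RR^n \setminus \RR^n_{\geq 0}$ with $\gamma_f^u = \gamma$. For every monomial $z^\alpha$ with $\alpha \in \gamma(P)$ we have $\langle u, \alpha\rangle = d^u_P$, hence $P_{\gamma(P)}(t^{u_1}z_1, \ldots, t^{u_n}z_n) = t^{d^u_P} P_{\gamma(P)}(z)$, and similarly $Q_{\gamma(Q)}$ is homogeneous of weight $d^u_Q$. Thus $f_\gamma$ is $\CC^*$-homogeneous of weight $d^u_f = d^u_P - d^u_Q$, and by definition of type II this weight is non-zero.

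Next, I would invoke Euler's identity obtained by differentiating the homogeneity relation at $t=1$: for every $z \in T \setminus Q_{\gamma(Q)}^{-1}(0)$,
\begin{align*}
\sum_{i=1}^n u_i z_i \frac{\partial f_\gamma}{\partial z_i}(z) = d^u_f \cdot f_\gamma(z).
\end{align*}
If $z \in \Sing f_\gamma$ then the left-hand side vanishes, so $d^u_f \neq 0$ forces $f_\gamma(z) = 0$. This yields the inclusion $f_\gamma(\Sing f_\gamma) \subset \{0\}$.

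For the sharper statement under non-degeneracy, I would translate $df_\gamma(z) = 0$ via the quotient rule: at a critical point $z$,
\begin{align*}
Q_{\gamma(Q)}(z)\, \partial_i P_{\gamma(P)}(z) = P_{\gamma(P)}(z)\, \partial_i Q_{\gamma(Q)}(z) \qquad (i=1,\ldots,n),
\end{align*}
and taking complex conjugates gives $\overline{Q_{\gamma(Q)}(z)}\, \grad P_{\gamma(P)}(z) = \overline{P_{\gamma(P)}(z)}\, \grad Q_{\gamma(Q)}(z)$. From the first part, $f_\gamma(z)=0$, so $P_{\gamma(P)}(z)=0$. Since $Q_{\gamma(Q)}(z) \neq 0$ on the domain of $f_\gamma$, this forces $\grad P_{\gamma(P)}(z) = 0$. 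But then $z \in P_{\gamma(P)}^{-1}(0) \setminus Q_{\gamma(Q)}^{-1}(0)$ with $\grad P_{\gamma(P)}(z) = 0$, contradicting Definition \ref{nondeg} applied to the type II face $\gamma$. Hence $\Sing f_\gamma = \emptyset$.

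There is essentially no serious obstacle: the only delicate point is the bookkeeping of which face is supported by which weight vector and ensuring that the conjugation in the definition of $\grad$ does not destroy the quotient-rule argument. Once the quasi-homogeneity of $f_\gamma$ with weight $d^u_f \neq 0$ is written down, Euler's identity handles the first inclusion and the non-degeneracy hypothesis handles the second immediately.
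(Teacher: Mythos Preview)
Your proof is correct and follows essentially the same approach as the paper: both arguments rest on Euler's identity for the quasi-homogeneous function $f_\gamma$ (you apply it directly to $f_\gamma$, while the paper applies it to $P_{\gamma(P)}$ and $Q_{\gamma(Q)}$ separately and combines via the quotient rule), and both conclude the second assertion from the non-degeneracy condition on $P_{\gamma(P)}^{-1}(0)\setminus Q_{\gamma(Q)}^{-1}(0)$. One small wording fix: for a type~II face it is not that \emph{every} supporting $u\in\RR^n\setminus\RR^n_{\geq 0}$ has $d^u_f\neq 0$, only that \emph{some} such $u$ does, so you should explicitly choose $u$ with $d^u_f\neq 0$ (as the paper does).
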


\begin{proof}
By the definition of faces of type~II, 
we can take a vector $u=(u_{1},\dots,u_{n}) 
\in \RR^n\setminus \RR^n_{\geq 0}$ 
such that $\gamma^{u}_{f}=\gamma$ and 
$d_{f}^{u}= d_{P}^{u}-d_{Q}^{u} \neq 0$. 
To prove the first assertion, assume that there exists non-zero $t_{0}\in f_{\gamma}(\Sing{f_{\gamma}})$,
i.e. there is a point $z^{0}\in \Sing{f_{\gamma}}$ such that $f_{\gamma}(z^{0})=t_{0}\neq 0$.
Since we have
\[\dfrac{\partial f_{\gamma}}{\partial z_{i}}(z^{0})=
\left(\dfrac{\partial P_{\gamma(P)}}{\partial z_{i}}(z^{0})-t_{0}\dfrac{\partial Q_{\gamma(Q)}}{\partial z_{i}}(z^{0})\right)\cdot \dfrac{1}{Q_{\gamma(Q)}(z^{0})}\quad (i=1,\dots, n),\]
we obtain 
\[\dfrac{\partial P_{\gamma(P)}}{\partial z_{i}}(z^{0})=t_{0}\cdot \dfrac{\partial Q_{\gamma(Q)}}{\partial z_{i}}(z^{0})\quad (i=1,\dots,n).\]
By Euler's theorem for quasi-homogeneous polynomials, we get
\[d^{u}_{P}\cdot P_{\gamma(P)}(z^{0})=t_{0}\cdot d_{Q}^{u}\cdot Q_{\gamma(Q)}(z^{0}).\] 
Since $f_{\gamma}(z^{0})=t_{0}$ and $t_{0}\neq 0$,
we have
\[d_{P}^{u}=d_{Q}^{u},\]
which is a contradiction.
The second assertion is now clear since if $f$ 
is non-degenerate, the central fiber $f^{-1}_{\gamma}(0)=
P^{-1}_{\gamma(P)}(0)\setminus Q^{-1}_{\gamma(Q)}(0)$ is also smooth.

\end{proof}

We will use the following lemma.

\begin{lem}[{Curve Selection Lemma, c.f. 
{\cite[Lemma~2]{N-Z2}}}]\label{cvsel}
Let $f_{1}(x),\dots, f_{s}(x)$, $g_{1}(x),\dots, 
g_{t}(x)$, $h_{1}(z),\dots,h_{u}(x)\in \RR[x_{1},\dots,x_{n}]$
be polynomials with real coefficients.
Let $U=\{x\in \RR^m\ |\ f_{i}(x)=0,\ 1\leq i \leq s\}$
and
$W=\{x\in \RR^m\ |\ g_{i}(x)>0,\ 1\leq 
i\leq t\}$.
Suppose that there exists a sequence 
$\{x^{k}\}_{k=0}^{\infty}\subset U\cap W$ such that 
$\lim_{k\to \infty}\begin{Vmatrix}x^{k}\end{Vmatrix}=\infty$
and for all $1\leq i\leq u$, $\lim_{k\to \infty}h_{i}(x^{k})=0$.
Then, there exists a real analytic curve $p\colon (0,1)\to U\cap W$
of the form $p(t)=at^{\alpha}+a_{1}
t^{\alpha+1}+\dots$ with $a\in \RR^m\setminus \{0\}$ and $\alpha<0$
such that $\lim_{t\to 0}\begin{Vmatrix}p(t)\end{Vmatrix}=\infty$ and 
$\lim_{t\to 0}h_{i}(p(t))=0$ for any $1\leq i\leq u$.
\end{lem}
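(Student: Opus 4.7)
The plan is to reduce this ``curve selection at infinity'' to the classical Curve Selection Lemma at a point (Milnor, \cite{Milnor}) via the inversion
$$\iota\colon \RR^m\setminus\{0\} \longrightarrow \RR^m\setminus\{0\}, \qquad \iota(x)=x/\norm{x}^2.$$
This real-analytic diffeomorphism carries neighborhoods of infinity to punctured neighborhoods of the origin, sending any sequence with $\norm{x^k}\to\infty$ to one with $y^k:=\iota(x^k)\to 0$. Substituting $x=y/\norm{y}^2$ in $f_i, g_j, h_i$ and multiplying by a sufficiently large even power of $\norm{y}$ to clear denominators, $U\cap W$ is transformed into a semi-algebraic subset $\tilde U\cap\tilde W$ of $\RR^m\setminus\{0\}$ cut out by new polynomials $F_i, G_j$, while each $h_i(x)$ becomes $H_i(y)/\norm{y}^{2e_i}$ for some polynomial $H_i$ and integer $e_i\ge 0$.

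First I would observe that the hypothesis translates to: there exists a sequence $\{y^k\}\subset\tilde U\cap\tilde W$ with $y^k\to 0$ and $H_i(y^k)=o(\norm{y^k}^{2e_i})$ for each $i$. In particular, for every integer $N\ge 1$ the origin is an accumulation point of the semi-algebraic set
$$S_N \;:=\; \tilde U\cap\tilde W\cap\bigcap_{i=1}^{u}\Bigl\{\,y\in\RR^m : H_i(y)^2 < \norm{y}^{4e_i}/N\,\Bigr\}.$$
Applying the classical semi-algebraic Curve Selection Lemma to each $S_N$ and passing through a standard diagonal/Puiseux argument (or, equivalently, the o-minimal formulation used in \cite{N-Z2}) yields one real-analytic curve $q\colon (0,\varepsilon)\to\tilde U\cap\tilde W$ with $q(t)\to 0$ and $H_i(q(t))/\norm{q(t)}^{2e_i}\to 0$ for every $i$. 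Writing $q(t)=bt^{\beta}+b_1 t^{\beta+1}+\cdots$ with $b\neq 0$ and $\beta\in\ZZ_{\ge 1}$ (rescaling $t$ if necessary), I then set $p(t):=\iota(q(t))=q(t)/\norm{q(t)}^2$. Expanding the reciprocal of $\norm{q(t)}^2=\norm{b}^2 t^{2\beta}(1+O(t))$ gives $p(t)=at^{\alpha}+a_1 t^{\alpha+1}+\cdots$ with $a=b/\norm{b}^2\in\RR^m\setminus\{0\}$ and $\alpha:=-\beta<0$; moreover $\norm{p(t)}=1/\norm{q(t)}\to\infty$ and $h_i(p(t))=H_i(q(t))/\norm{q(t)}^{2e_i}\to 0$, which is exactly the desired conclusion.

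The main obstacle is the refined decay condition after inversion: clearing denominators makes the hypothesis $h_i(x^k)\to 0$ at infinity equivalent to $H_i(y)=o(\norm{y}^{2e_i})$, not merely $H_i(y)\to 0$. One therefore cannot apply the standard curve selection lemma directly to the closed set $\overline{\tilde U\cap\tilde W}\cap\{H_i=0\}$; the shrinking family $\{S_N\}$ above is what allows one to propagate the rate of vanishing of $h_i$ along the analytic curve produced by selection, and handling this diagonal passage cleanly (so that a single curve works for all indices $i$ simultaneously) is the chief technical point of the argument.
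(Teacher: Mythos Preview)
The paper does not prove this lemma; it is quoted from \cite[Lemma~2]{N-Z2}, with only the subsequent remark that $\alpha$ is a half integer. So there is no in-paper argument to compare against, and your task is really to reconstruct the N\'emethi--Zaharia proof.

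Your inversion $x\mapsto y=x/\norm{x}^{2}$ is the correct first move, and the computation that $h_i(x)=H_i(y)/\norm{y}^{2e_i}$ with $H_i$ polynomial is fine. The gap is exactly where you locate it, but it is a genuine gap rather than a routine step: applying curve selection to each $S_N$ separately yields an infinite family of arcs $q_N$, and there is no ``standard diagonal/Puiseux argument'' that fuses countably many distinct analytic arcs into a single one. Semi-algebraic curve selection is not a compactness statement of that kind. (Note that the difficulty is not getting one curve to work for all indices $i$---any single $S_N$ already does that---but getting one curve to work for all $N$.)

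The clean fix, which is essentially what is done in the literature, is to trade the family $\{S_N\}$ for one auxiliary real variable and apply curve selection \emph{once} in $\RR^{m}\times\RR$ to the semi-algebraic set
\[
\Bigl\{(y,\varepsilon)\ :\ y\in\tilde U\cap\tilde W,\ \varepsilon>0,\ H_i(y)^{2}<\varepsilon\,\norm{y}^{4e_i}\ \text{for all }i\Bigr\},
\]
which by hypothesis has $(0,0)$ in its closure. The selected arc $t\mapsto(q(t),\varepsilon(t))$ with $(q(t),\varepsilon(t))\to(0,0)$ then satisfies $H_i(q(t))=o(\norm{q(t)}^{2e_i})$ automatically, after which your inversion $p(t)=\iota(q(t))$ and the Puiseux expansion go through exactly as you wrote. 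Equivalently one can compactify via $x\mapsto(x/\norm{x},\,1/\norm{x})\in S^{m-1}\times\RR_{\geq 0}$; this variant is what produces the half-integer exponents mentioned in the paper's remark, since $\norm{x}$ rather than $\norm{x}^{2}$ enters the change of variables.
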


\begin{rem}
By the proof of the above lemma in \cite{N-Z2},
we see moreover that 
$\alpha$ is a half integer.
\end{rem}

\section{Main theorems}

\begin{thm}\label{NST}
Let $f(z)=\frac{P(z)}{Q(z)}$ be a
rational function $\CC^n\setminus Q^{-1}(0)\to \CC$.
Assume that the divisor $P^{-1}(0)
\cup Q^{-1}(0) \subset \CC^n$ is
normal crossing in a neighborhood of
$P^{-1}(0) \cap Q^{-1}(0)$.
Then we have
\begin{align*}
\Bif_f \subset f({\rm Sing} f) \cup \Sim_{f}.
\end{align*}
\end{thm}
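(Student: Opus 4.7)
The plan is to prove the contrapositive: given $c\in\CC$ with $c\notin f(\Sing f)\cup \Sim_f$, I will show that $f$ admits a $C^\infty$ local trivialization over some small disk $D_\epsilon(c)\subset\CC$, whence $c\notin \Bif_f$.

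\textbf{Step 1: Choosing parameters.} Since $c\notin \Sim_f$, by Definition~\ref{defmf} there exist $\epsilon>0$ and $R>0$ such that every $z\in\CC^n\setminus Q^{-1}(0)$ with $\|z\|\geq R$ and $|f(z)-c|<\epsilon$ satisfies $z\notin\Mil_f$; equivalently $\grad f(z)$ and $z$ are $\CC$-linearly independent. Using $c\notin f(\Sing f)$ and shrinking $\epsilon$ further, I may also assume $\grad f\neq 0$ on $f^{-1}(\ov{D_\epsilon(c)})\cap\{\|z\|\leq 2R\}$.

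\textbf{Step 2: Constructing a controlled vector field.} Set $U:=f^{-1}(D_\epsilon(c))\cap(\CC^n\setminus Q^{-1}(0))$. On the region $\{\|z\|\leq 2R\}\cap U$ use $V_{0}:=\grad f/\|\grad f\|^{2}$, which satisfies $df(V_{0})=1$. On $\{\|z\|\geq R\}\cap U$, the two $\CC$-linear forms $df(\cdot)$ and $\langle\cdot,z\rangle$ on $\CC^n$ are independent by Step~1 (since proportionality would force $\grad f$ to be a multiple of $z$), so one can solve
\begin{align*}
df(V_{\infty})=1,\qquad \langle V_{\infty},z\rangle=0
\end{align*}
for a smooth vector field $V_{\infty}$. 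The second identity gives $\tfrac{d}{d\tau}\|z(\tau)\|^{2}=2\Re\langle V_{\infty},z\rangle=0$ along integral curves, so the flow of $V_{\infty}$ preserves the Euclidean norm. Interpolate $V_{0}$ and $V_{\infty}$ via a cutoff function in $\|z\|$ supported in the annulus $R\le\|z\|\le 2R$; because $df(V)=1$ is an affine condition, convex combinations of solutions remain solutions, so the resulting field $V$ on $U$ satisfies $df(V)=1$ everywhere.

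\textbf{Step 3: Completeness of the flow.} The flow of $V$ cannot escape to infinity: on $\{\|z\|\geq 2R\}$ the norm is preserved, and the transition region is norm-bounded. The only remaining risk is a trajectory $\gamma(\tau)$ approaching $Q^{-1}(0)$. If the limit $w$ lay in $Q^{-1}(0)\setminus P^{-1}(0)$, then $f(\gamma(\tau))\to\infty$, contradicting $f(\gamma)\in D_{\epsilon}(c)$; hence $w\in P^{-1}(0)\cap Q^{-1}(0)$. The normal-crossing hypothesis at $w$ provides local coordinates in which $P=u\prod_{i\in I}z_{i}^{a_{i}}$ and $Q=v\prod_{j\in J}z_{j}^{b_{j}}$ with $u,v$ units and $I\cap J=\emptyset$ (by coprimality). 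An appropriate localization of the Curve Selection Lemma (Lemma~\ref{cvsel}) applied to the real-algebraic conditions defining integral curves escaping to $w$ then produces a real analytic arc $p(t)\to w$ with $f(p(t))\to c_0\in\ov{D_{\epsilon}(c)}$ and $p'(t)$ proportional to $V(p(t))$; a direct monomial estimate in the local coordinates shows that the time integral $\int\|V(p)\|\,dt$ along such an arc diverges, contradicting finite-time escape. The existence of the complete flow with $df(V)=1$ then gives the desired $C^\infty$ trivialization.

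\textbf{Main obstacle.} The delicate ingredient is the estimate in Step~3 near the indeterminacy locus, which requires combining the monomial normal-crossing form of $P$ and $Q$ with the specific structure of $V$ (built from $\grad f$ and the Euler direction). The normal-crossing hypothesis is essential here; without it, components of $P^{-1}(0)$ and $Q^{-1}(0)$ could meet along strata where the monomial model fails. By contrast, the exclusion of escape to $\|z\|=\infty$ is a clean consequence of $c\notin\Sim_f$, which is exactly what allows $V_{\infty}$ to be constructed tangent to spheres.
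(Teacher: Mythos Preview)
Your overall strategy matches the paper's: prove the contrapositive by building, over a small disc $D$ around $c\notin f(\Sing f)\cup\Sim_f$, a vector field $V$ with $df(V)\equiv 1$ whose flow is complete, where the condition $c\notin\Sim_f$ is precisely what lets you make $V$ tangent to large spheres. Steps~1--2 are fine and essentially the same as in the paper.

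The genuine gap is in Step~3, at the indeterminacy locus. Your invocation of the Curve Selection Lemma is not valid: that lemma produces a real-analytic arc lying in a given \emph{semi-algebraic} set, but the condition ``$p'(t)$ is proportional to $V(p(t))$'' is a differential condition on $p$, not a semi-algebraic constraint on points of $\CC^n$. There is no mechanism by which Lemma~\ref{cvsel} hands you an integral curve of $V$. (Also, the quantity you want to show diverges is $\int \|p'\|/\|V(p)\|\,dt$, the flow-time, not $\int \|V(p)\|\,dt$.) The underlying intuition---that $\|V_0\|=1/\|\grad f\|$ tends to $0$ as one approaches $\IR(f)$ in the monomial chart, so the flow should slow down---is reasonable when $c\neq 0$, but turning it into a proof requires a Lyapunov-type inequality or an explicit barrier function in the normal-crossing coordinates, which you have not supplied; and the case $c=0$ needs separate care since then $|f|$ is not bounded below.

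The paper sidesteps this entirely by a blow-up. One constructs (a tower of) blow-ups $\pi:\widetilde{\CC^n}\to\CC^n$ over $\IR(f)=P^{-1}(0)\cap Q^{-1}(0)$ so that $g:=f\circ\pi$ extends holomorphically to $\widetilde{\CC^n}$ without indeterminacy, and so that the fibres $g^{-1}(s)$ meet the exceptional divisor $E$ transversally. The sphere condition from $c\notin\Sim_f$ lifts to the strict transforms $\widetilde{S_R}$. One then builds the vector field \emph{on} $\widetilde{\CC^n}$, requiring in each local chart near $E=\{\zeta_1=0\}$ that $v\cdot\zeta_1\equiv 0$ in addition to $v\cdot g\equiv 1$ and tangency to $\widetilde{S_R}$. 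The first condition makes $E$ invariant under the flow, so integral curves starting in $g^{-1}(D)\setminus E\simeq f^{-1}(D)$ can never reach $E$; no estimate is needed. This is exactly where the normal-crossing hypothesis is used: it guarantees that such a resolution exists with the required transversality, so that the three local linear conditions on $v$ are simultaneously solvable and can be patched by a partition of unity.

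In short, your handling of infinity is correct and agrees with the paper; your handling of $\IR(f)$ is where the argument breaks, and the paper's remedy is to resolve the indeterminacy rather than to estimate the raw flow on $\CC^n\setminus Q^{-1}(0)$.
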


\begin{proof}
First, let us consider the simplest case where
$P^{-1}(0), Q^{-1}(0) \subset \CC^n$ are smooth
and intersect transversally. For $R>0$ we set
$S_R= \{ z \in \CC^n | \begin{Vmatrix}z\end{Vmatrix} =R \}$.
Let $\mathcal{S}$ be the coarsest Whitney stratification
of the normal crossing divisor $P^{-1}(0)
\cup Q^{-1}(0)$. Then there exists $R_0 \gg 0$
such that for any $R>R_0$ the sphere $S_R$
intersects each stratum in $\mathcal{S}$ transversally.
Now let $s_0 \in \CC$ be a point
such that $s_0 \notin f({\rm Sing} f) \cup \Sim_{f}$ and
$D \subset \CC$ a small open disc centered at $s_0$
satisfying the condition
\begin{align*}
\overline{D} \subset \CC \setminus \{
f({\rm Sing} f) \cup \Sim_{f} \}.
\end{align*}
Then by an analogue of
N\'emethi and Zaharia \cite[Lemma 3]{N-Z1} for
rational functions, there exists $R_1 \geq R_0$
such that
\begin{align*}
f^{-1}(D) \cap \Mil_f \cap
\{ z \in \CC^n | \begin{Vmatrix}z\end{Vmatrix} > R_1 \} = \emptyset.
\end{align*}
This implies that for any $R>R_1$ the sphere $S_R$
intersects the fiber $f^{-1}(s)$ transversally
for any $s \in D$. Let $\pi : \widetilde{\CC^n}
\rightarrow \CC^n$ be the blow-up of $\CC^n$
along $P^{-1}(0) \cap Q^{-1}(0)$ and
$E= \pi^{-1} \{ P^{-1}(0) \cap Q^{-1}(0) \}$
the exceptional divisor in it. Then the
meromorphic extension $g:=f \circ \pi$ of $f$
to $\widetilde{\CC^n}$ has no point of
indeterminacy and for any $s \in \CC$ its
fiber $g^{-1}(s)$ intersects $E$ transversally.
Moreover for $R>R_0$ we see that the closure
\begin{align*}
\widetilde{S_R}:= \overline{
\pi^{-1} [ S_R \setminus
\{ P^{-1}(0) \cap Q^{-1}(0) \} ] }
\subset \widetilde{\CC^n}
\end{align*}
is a smooth real hypersurface of the complex
manifold $\widetilde{\CC^n}$. For $s \in \CC$
let $\mathcal{S}_s$ be the coarsest Whitney stratification
of the normal crossing divisor $g^{-1}(s)
\cup E$. Then for any $R>R_1$ the real hypersurface
$\widetilde{S_R}$ intersects each stratum in
$\mathcal{S}_s$ transversally. This implies that
for any point of $g^{-1}(s) \cap E \cap \widetilde{S_R}$
and a local coordinate system $\zeta =( \zeta_1, \zeta_2, \ldots,
\zeta_n)$ of $\widetilde{\CC^n}$ around it
such that $E= \{ \zeta_1=0 \}$ we can find locally a
smooth real vector field $v( \zeta )$ on
$\widetilde{\CC^n}$ such that
\begin{align*}
v( \zeta ) \zeta_1 \equiv 0, \qquad
v( \zeta ) g( \zeta ) \equiv 1
\end{align*}
and $v( \zeta )$ is tangent to the real
hypersurface
$\widetilde{S_{ \begin{Vmatrix} \pi ( \zeta ) \end{Vmatrix} }}$
passing through the point $\zeta$.
By the first (resp. third) condition on $v( \zeta )$,
its integral curves do not go into the exceptional
divisor $E$ (resp. at infinity) in finite time.
Now by
our choice of $D$ and the construction of
the blow-up $\pi$, the morphisim
$g^{-1}(D) \rightarrow D$ induced by $g$
is a (non-proper)
holomorphic submersion. Moreover
the boundary of
the closure
\begin{align*}
\overline{g^{-1}(D)}= \overline{
\pi^{-1} f^{-1} (D) }
\subset \widetilde{\CC^n}
\end{align*}
is smooth and intersects $E$ transversally.
Then as in the proof of
N\'emethi and Zaharia \cite[Theorem 1]{N-Z1},
by using a partition of unity we can construct
a smooth real vector field $\tilde{v}$ globally defined
on $g^{-1}(D)$ such that
\begin{align*}
\tilde{v} g \equiv 1
\end{align*}
whose integral curves do not go into the exceptional
divisor $E$ or at infinity in finite time.
By the restriction $u$ of $\tilde{v}$ to
$f^{-1}(D) = g^{-1}(D) \setminus E \subset \CC^n$
and its multiple $iu$ ($i:= \sqrt{-1}$) we can
prove that the
morphism $f^{-1}(D) \rightarrow D$ is a
$C^{\infty}$ trivial fibration over $D$. Finally,
let us consider the general case.
We can construct a composition
$\pi : \widetilde{\CC^n}
\rightarrow \CC^n$ of several blow-ups
of $\CC^n$ over $P^{-1}(0) \cap Q^{-1}(0)$
so that the
meromorphic extension $g:=f \circ \pi$ of $f$
to $\widetilde{\CC^n}$ has no point of
indeterminacy (see e.g.
the proof of \cite[Theorem 3.6]{M-T-2} and 
\cite[Section~3]{M-T-3}).
Then the proof proceeds similarly to
the one in the previous case.
This completes the proof.
\end{proof}

Note that the assumption of this theorem are
satisfied by generic polynomials $P(z)$ and
$Q(z)$ such that $P(0) \not= 0$ and
$Q(0) \not= 0$. Moreover, in the two
dimensional case $n=2$ the same is true
also for generic $P(z)$ and $Q(z)$.
For $n \geq 2$, if the intersection
of $\Nt(Q)$ and each coordinate axis of $\RR^n$
is equal to $\{ 0 \} \subset \RR^n$ then
the assumption of Theorem \ref{NST}
is satisfied by generic $P(z)$ and $Q(z)$.
Indeed, for such $Q(z)$ we have
\begin{align*}
Q^{-1}(0) \subset T=( \CC^*)^n \subset \CC^n.
\end{align*}
This is the case when $Q(z)=1$ and $f(z)=P(z)$
is a polynomial.

\begin{thm}\label{main2}
Let $f(z)=\frac{P(z)}{Q(z)}$ be a rational 
function $\CC^n\setminus Q^{-1}(0)\to \CC$.
Assume that $f$ is non-degenerate in the 
sense of Definition~\ref{nondeg}.
Then, we have
\begin{align}\label{main2cont}
\Sim_{f} \subset  \{0\}\cup 
\Bigl( \bigcup_{\gamma\in 
\mathscr{F}_{\mathrm{I}}}f_{\gamma}(\Sing{f_{\gamma}})
\Bigr).
\end{align}
\end{thm}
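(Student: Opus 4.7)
The plan is to fix $s_0\in\Sim_f$ with $s_0\ne0$ (the value $0$ is already allowed on the right-hand side), apply the Curve Selection Lemma to produce a real analytic curve in $\Mil_f$ escaping to infinity along which $f\to s_0$, and then read off from its Puiseux expansion a face $\gamma\prec\Nt(f)$ together with a point $b\in T$ such that $s_0=f_\gamma(b)$ and $b\in\Sing f_\gamma$; non-degeneracy together with Lemma~\ref{type2sing} will then force $\gamma\in\mathscr{F}_{\mathrm{I}}$.

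The Milnor condition $\grad f(z)=\lambda z$ is the vanishing of the minors $\overline{\partial f/\partial z_i}\,z_j-\overline{\partial f/\partial z_j}\,z_i=0$, which become real polynomial equations after clearing $Q^2$; Lemma~\ref{cvsel} yields a real analytic curve $z\colon (0,1)\to\Mil_f$ with $\begin{Vmatrix}z(t)\end{Vmatrix}\to\infty$ and $f(z(t))\to s_0$ as $t\to 0^+$. For each coordinate with $z_i\not\equiv 0$, Puiseux-expand $z_i(t)=a_it^{u_i}+\text{h.o.t.}$ with $a_i\in\CC^*$; set $I:=\{i:z_i\not\equiv 0\}$ and extend to $\tilde u\in\RR^n$ by $\tilde u_i:=u_i$ for $i\in I$ and $\tilde u_i:=M$ for $i\notin I$, with $M$ sufficiently large. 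Since $\begin{Vmatrix}z(t)\end{Vmatrix}\to\infty$ forces $\min_{i\in I}u_i<0$, we have $\tilde u\in\RR^n\setminus\RR^n_{\geq 0}$. Writing $\gamma:=\gamma^{\tilde u}_f$, $\gamma(P):=\gamma^{\tilde u}_P$, $\gamma(Q):=\gamma^{\tilde u}_Q$, one checks that for $M$ large these faces all lie in the coordinate subspace $\RR^I$, and the point $b$ with coordinates $a_i$ for $i\in I$ and $1$ for $i\notin I$ lies in $T=(\CC^*)^n$. A leading-order computation gives $P(z(t))=P_{\gamma(P)}(b)\,t^{d^{\tilde u}_P}+\text{h.o.t.}$ and $Q(z(t))=Q_{\gamma(Q)}(b)\,t^{d^{\tilde u}_Q}+\text{h.o.t.}$; the hypothesis $s_0\ne0$ excludes $P_{\gamma(P)}(b)=0$ and $Q_{\gamma(Q)}(b)=0$ (these would force $f(z(t))\to 0$ or $\infty$; the degenerate case where both vanish is handled by refining $\tilde u$ to a smaller sub-face). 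Consequently $d^{\tilde u}_f=0$ and $s_0=f_\gamma(b)$.

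The crux is then to prove $b\in\Sing f_\gamma$. Set $w_i:=(\partial f_\gamma/\partial z_i)(b)$; since $\gamma(P),\gamma(Q)\subset\RR^I$ the rational function $f_\gamma$ does not involve $z_i$ for $i\notin I$, so automatically $w_i=0$ there. Using $Q\cdot\partial f/\partial z_i=\partial P/\partial z_i-f\cdot\partial Q/\partial z_i$ together with $d^{\tilde u}_f=0$, one obtains $\partial f/\partial z_i(z(t))=w_i\,t^{-u_i}+\text{h.o.t.}$ Writing $\lambda(t)=L\,t^\beta+\text{h.o.t.}$ with $L\ne 0$, the Milnor identity $\partial f/\partial z_i(z(t))=\overline{\lambda(t)}\,\overline{z_i(t)}$ shows that the set $I':=\{i\in I:w_i\ne 0\}$ satisfies $u_i=-\beta/2=:u_*$ and $w_i=\bar L\,\bar a_i$ for each $i\in I'$, while $u_i>u_*$ for $i\in I\setminus I'$; in particular $u_*<0$. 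The Euler quasi-homogeneity identity
\[
\sum_{i=1}^n \tilde u_i\,z_i\,\tfrac{\partial f_\gamma}{\partial z_i}(z)=d^{\tilde u}_f\cdot f_\gamma(z)=0,
\]
evaluated at $z=b$, reduces to $0=u_*\sum_{i\in I'}a_i w_i=u_*\,\bar L\sum_{i\in I'}|a_i|^2$, which forces $\bar L=0$ unless $I'=\emptyset$; hence $I'=\emptyset$ and $b\in\Sing f_\gamma$. Since $\tilde u\in\RR^n\setminus\RR^n_{\geq 0}$ supports $\gamma$, the face $\gamma$ is of type~I or of type~II; the latter is ruled out by Lemma~\ref{type2sing} combined with the non-degeneracy of $f$, which would give $\Sing f_\gamma=\emptyset$. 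Therefore $\gamma\in\mathscr{F}_{\mathrm{I}}$ and $s_0=f_\gamma(b)\in f_\gamma(\Sing f_\gamma)$, proving the inclusion. The main technical hurdle is the precise coupling of leading-order matching with Euler's identity in the displayed equation above; the degenerate case where both $P_{\gamma(P)}(b)$ and $Q_{\gamma(Q)}(b)$ vanish is a secondary subtlety resolved by refining the weight $\tilde u$.
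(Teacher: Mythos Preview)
Your overall strategy mirrors the paper's, and in the ``generic'' situation where both $P_{\gamma(P)}(b)\ne 0$ and $Q_{\gamma(Q)}(b)\ne 0$ your Euler-identity argument is essentially the paper's Case~1. The gap is in how you dispose of the remaining cases.

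You assert that $s_0\ne 0$ rules out $P_{\gamma(P)}(b)=0$ (with $Q_{\gamma(Q)}(b)\ne 0$) because ``this would force $f(z(t))\to 0$''. That is not true. If $P_{\gamma(P)}(b)=0$ then the actual order $e_P$ of $P(z(t))$ satisfies $e_P>d_P^{\tilde u}$, while $e_Q=d_Q^{\tilde u}$; nothing prevents $e_P=e_Q$, in which case $f(z(t))$ converges to a nonzero finite value and your expansion $\partial f/\partial z_i(z(t))=w_i\,t^{-u_i}+\cdots$ (which relied on $d_f^{\tilde u}=0$) is no longer valid. The same objection applies with the roles of $P$ and $Q$ swapped. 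These are exactly the paper's Cases~2 and~3: there one has $d_f^{\tilde u}\ne 0$, so $\gamma$ is of type~II, and the non-degeneracy hypothesis on $\grad P_{\gamma(P)}$ (resp.\ $\grad Q_{\gamma(Q)}$) along $P_{\gamma(P)}^{-1}(0)\setminus Q_{\gamma(Q)}^{-1}(0)$ (resp.\ the symmetric set) is used to force a nonvanishing leading coefficient, which is then shown to contradict an Euler-type identity (the paper's Lemma~3.3). Your proof never invokes that part of Definition~\ref{nondeg}; you only use non-degeneracy through Lemma~\ref{type2sing} at the very end, and that is not enough.

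The suggestion to handle the ``both vanish'' case by ``refining $\tilde u$ to a smaller sub-face'' does not work either: the weight $\tilde u$ and hence the face $\gamma$ are dictated by the leading exponents of the curve and cannot be altered. In the paper's Case~4 this situation is eliminated using the linear-independence clause of the non-degeneracy condition on $P_{\gamma(P)}^{-1}(0)\cap Q_{\gamma(Q)}^{-1}(0)$, again combined with the Euler-type identity. To repair your argument you must carry out this case analysis explicitly, working (as the paper does) with the numerator $\overline{Q}\,\grad P-\overline{P}\,\grad Q$ rather than with $\grad f$ directly, so that the leading coefficients are well defined even when $Q_{\gamma(Q)}(b)=0$.
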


\begin{proof}
Our proof is inspired from that 
of \cite[Theorem 2]{N-Z1}. 
Assume that $s_0 \in \Sim_{f}$.
Then, by the definition of $\Sim_{f}$, 
there exists a sequence 
$\{z^{k}\}_{k=0}^{\infty}$ in $\Mil_{f}$ such that 
$\lim_{k\to \infty}\begin{Vmatrix}z^{k}\end{Vmatrix}
=\infty$ and $\lim_{k\to \infty}f(z^{k})= s_0$.
By the curve selection lemma (Lemma~\ref{cvsel}), 
we can take an analytic curve $h(t)\colon (0,1)\to \CC^n$ of the form
\begin{align}\label{hexp}
h(t)=at^{\alpha}+a_{1}t^{\alpha+1}+
\cdots\quad (\mbox{$a\neq 0$ and $\alpha<0$}),
\end{align}
satisfying the conditions:
\begingroup
\renewcommand{\arraystretch}{1.6}
\[
\left\{\begin{array}{l}
h(t)\in \Mil_f \quad (t\in (0,1)),\\
\lim_{t\to 0}
\begingroup
\renewcommand{\arraystretch}{1}
\begin{Vmatrix}h(t)\end{Vmatrix}=\infty, 
\endgroup
\\
\lim_{t\to 0}f(h(t))=s_0.
\end{array}\right.
\]
\endgroup
By the definition of $\Mil_{f}$, 
there is an analytic function $\lambda(t)
\colon (0,1)\to \CC$ such that 
\begin{align}\label{iden3}
\grad{f}(h(t))=\lambda(t)h(t).
\end{align}
We will use the identities:
\begin{align}\label{iden1}
\dfrac{df(h(t))}{dt}=\naiseki{\dfrac{dh}{dt}(t),
\grad{f}(h(t))}.
\end{align}
If $\grad{f}(h(t))\equiv 0\ (t\in (0,1))$, the 
identity (\ref{iden1}) implies that
$\frac{df(h(t))}{dt}\equiv 0$ and $f(h(t))$ is 
a constant function.
Hence $\sigma=\lim_{t\to 0}f(h(t))$ is in $\mathrm{Sing}{f}$.
Therefore, we can assume $\grad{f}(h(t))
\not\equiv 0$.
If $f(h(t))\equiv 0$, the identities (\ref{iden1}) 
and (\ref{iden3}) imply that
\[\ov{\lambda(t)}\naiseki{\dfrac{dh}{dt}(t),h(t)}\equiv 0.\]
By (\ref{hexp}), we have 
\[\naiseki{\dfrac{dh}{dt}(t),h(t)}=|a|^2\alpha t^{2\alpha-1}+\cdots.\]
Here $\cdots$ stands for higher order terms. 
In particular, $\naiseki{\frac{dh}{dt}(t),h(t)}\not\equiv 0$ 
and we thus obtain
$\lambda(t)\equiv 0$, which is in contradiction 
with $\grad{f}(h(t))(=\lambda(t)h(t))\not\equiv 0$.
So, we will also assume $f(h(t))\not\equiv 0$.

Let the expansions of $f(h(t)), \grad(f(h(t)))$ and 
$\lambda(t)$ be of the following forms:
\begin{empheq}[left=\empheqlbrace]{align}\notag
f(h(t))=bt^{\beta}+ \cdots \cdots,\\
\grad{f}(h(t))=ct^{\rho}+ \cdots \cdots,\notag\\
\lambda(t)=\lambda_{0} t^{\delta}+ \cdots 
\cdots,\notag
\end{empheq}
where $b \in \CC, c \in \CC^n, \lambda_{0} \in \CC$ are not zero. 
Note that the assumption $\lim_{t\to 0}f(h(t))=s_0
\in \CC$ implies $\beta\geq 0$.
By considering the expansions of both sides of 
(\ref{iden3}), we have
\begin{align*}
\rho&=\delta+\alpha, \quad \mbox{and}\\
c&=\lambda_{0}a.
\end{align*}
Hence, we have $\naiseki{a,c}\neq 0$. 
For an analytic function $g(t)=g_{0}t^{\eta}+
\cdots \cdots$ ($g_0 \not= 0$), 
we denote by $\Deg{g(t)}$ its degree with respect to $t$. 
Namely we set $\Deg{g(t)}=\eta$. 
Then the degree of the right hand side of 
(\ref{iden1}) is equal to $\alpha-1+\rho$.
By (\ref{iden1}), we thus obtain 
\[\alpha-1+\rho\ (=\beta-1)\geq -1,\]
which implies $\rho>0$ since we have $\alpha<0$.
Moreover, we have 
\[\delta=\rho-\alpha>0.\]

We may assume that 
\begin{align}\label{ht}
h(t)=(w_{1}^{0}t^{\nu_{1}}+w_{1}^{1}t^{\nu_{1}+1}+
\cdots, \dots, w_{k}^{0}t^{\nu_{k}}+w_{k}^{1}t^{\nu_{k}+1}+\cdots,0,\dots,0),
\end{align}
where $w_{1}^{0}\neq 0,\dots, w_{k}^{0}\neq 0$ and 
$\alpha=\nu_{1}\leq \nu_{2}\leq \dots \leq \nu_{k}$.
We identify 
\[\{(x_{1},\dots,x_{n})\in \RR^n\ |\ x_{k+1}=\dots=x_{n}=0\}\]
with $\RR^{k}$.
Then, we will consider the supporting face 
$\gamma\subset \RR^k(\subset \RR^n)$ of 
$\Nt(f)\cap \RR^k(=\Nt(P)\cap \RR^k+\Nt(Q)\cap \RR^k)$ 
by the vector $(\nu_{1},\dots, 
\nu_{k})\in \RR^k$.
Since $f(h(t))\not\equiv 0$, we have 
$\Nt(P)\cap \RR^k\neq \emptyset$ and 
$\Nt(Q)\cap \RR^k\neq \emptyset$.
Let $m(<0)$ be a real number smaller than 
the (non-positive) integer 
\begin{align*}
\min\{\nu_{1}w_{1}+\dots+\nu_{k}w_{k}\in 
\RR \ |\ (w_{1},\dots,w_{n})\in \Nt(f)\}&
\\-\min\{\nu_{1}w_{1}+\dots+\nu_{k}w_{k}\in 
\RR &\ |\ (w_{1},\dots,w_{k})\in \Nt(f)\cap \RR^k\} 
\end{align*}
and set 
\begin{align*}
\nu:=(\nu_{1},\dots,\nu_{k}, -m,\dots,-m) \in \RR^n. 
\end{align*}
Then $\gamma$ is the supporting face of 
$\Nt(f)(\subset \RR_{\geq 0}^{n})$ by 
$\nu \in \RR^n$. 
Recall that by using the decomposition $\gamma=\gamma(P)+
\gamma(Q)\ (\gamma(P)\prec \Nt(P), \gamma(Q)\prec \Nt(Q))$ 
we defined $f_{\gamma}(z):=\frac{P_{\gamma(P)}(z)}{Q_{\gamma(P)}(z)}$ 
and $d_{f}^{\nu}=d_{P}^{\nu}-d_{Q}^{\nu}$.
Set 
\[w^{0}:=(w_{1}^{0},\dots,w_{k}^{0},1,\dots,1)\in T=( \CC^*)^n.\]
Then, for $j=1,\dots,k$ we have

\begin{empheq}[left=\empheqlbrace]{align}\notag
P(h(t))&=P_{\gamma(P)}(w^{0})t^{d_{P}^{\nu}}+\cdots,\\
\dfrac{\partial P}{\partial z_{j}}(h(t))&=
\dfrac{\partial P_{\gamma(P)}}{\partial z_{j}}(
w^{0})t^{d_{P}^{\nu}-\nu_{j}}+\cdots,\notag
\end{empheq}
\begin{empheq}[left=\empheqlbrace]{align}\notag
Q(h(t))&=Q_{\gamma(Q)}(w^{0})t^{d_{Q}^{\nu}}+\cdots,\\
\dfrac{\partial Q}{\partial z_{j}}(h(t))&=\dfrac{\partial 
Q_{\gamma(Q)}}{\partial z_{j}}(w^{0})
t^{d_{Q}^{\nu}-\nu_{j}}+\cdots.\notag
\end{empheq}
We set 
\begin{empheq}[left=\empheqlbrace]{align}\notag
e_{P}&:=\Deg{P(h(t))},\\
e_{Q}&:=\Deg{Q(h(t))}.\notag 
\end{empheq}
Namely the expansions of $P(h(t))$ and $Q(h(t))$ are of the form: 
\begingroup
\renewcommand{\arraystretch}{1.6}
\[
\left\{\begin{array}{l}
P(h(t))=P_{e_{P}}t^{e_{P}}+ \cdots \cdots, \\
Q(h(t))=Q_{e_{Q}}t^{e_{Q}}+ \cdots \cdots,
\end{array}
\right.
\]
\endgroup
with $P_{e_{P}}\neq 0$ and $Q_{e_{P}}\neq 0$. 
Note that 
\begin{align}\label{bigstar}\tag{$\bigstar$}
\left\{\begin{array}{l}
e_{P}\geq d_{P}^{\nu}, \\
e_{Q}\geq d_{Q}^{\nu}.
\end{array}
\right.
\end{align}
Since $\lim_{t\to 0}f(h(t))=
s_{0}\in \CC$, we have $e_{P}\geq e_{Q}$.
If $e_{P}>e_{Q}$, the value $s_0=\lim_{t\to 0}f(h(t))$ is $0$
and contained in the right hand side of (\ref{main2cont}).
So we will assume $e:=e_{P}=e_{Q}$ in the following.

We set 
\[l:=\min\{d_{P}^{\nu}, d_{Q}^{\nu}\}.\]
We will use the obvious identity:
\begin{align}\label{iden4}
\ov{Q(h(t))}\grad{P}(h(t))-\ov{P(h(t))}
\grad{Q}(h(t))=\ov{Q^2(h(t))}\grad{f}(h(t)).
\end{align}
By (\ref{iden3}) and (\ref{ht}), 
the $j(>k)$-th entry of the right hand side of (\ref{iden4}) is zero. 
Note also that
for $1\leq j\leq k$ the degree of the $j$-th entry 
 of the left hand side of (\ref{iden4}) 
is larger than or equal to $e+l-\nu_{j}$.
We set 
\begingroup
\renewcommand{\arraystretch}{1.6}
\begin{align}\label{pti}
\wt{P_{e}}&:=\left\{
\begin{array}{ll}
P_{e}&(\mbox{if $l=d_{Q}^{\nu}$})\\
0&(\mbox{otherwise}).
\end{array}
\right.
\\
\wt{Q_{e}}&:=\left\{
\begin{array}{ll}
Q_{e}&(\mbox{if $l=d_{P}^{\nu}$})\\
0&(\mbox{otherwise}).
\end{array}
\right.\label{qti}
\end{align}
\endgroup
Note that at least one of $\wt{P_{e}}$ and 
$\wt{Q_{e}}$ is not zero. 
For $1 \leq j \leq k$ let $A_{j} \in \CC$ be the 
coefficient of $t^{e+l-\nu_{j}}$ in the $j$-th entry of the 
left hand side of (\ref{iden4}). 
Then its complex conjugate $\ov{A_{j}}$ is expressed as 
\[
\ov{A_{j}}=\wt{Q_{e}}
\dfrac{\partial 
P_{\gamma(P)}}{\partial z_{j}}(w^{0})-
\wt{P_{e}}\dfrac{\partial Q_{\gamma(Q)}}{\partial z_{j}}(w^{0}).
\]
Namely we have 
\begin{align}\label{key}
\left(
    \begin{array}{c}
      \ov{A_{1}} \\
      \ov{A_{2}} \\
      \vdots \\
      \ov{A_{k}}
    \end{array}
  \right)
= \wt{Q_{e}}
\left(
    \begin{array}{c}
      \frac{\partial P_{\gamma(P)}}{\partial z_{1}}(w^{0}) \\
      \frac{\partial P_{\gamma(P)}}{\partial z_{2}}(w^{0}) \\
      \vdots \\
      \frac{\partial P_{\gamma(P)}}{\partial z_{k}}(w^{0})
    \end{array}
  \right)
- \wt{P_{e}}
\left(
    \begin{array}{c}
      \frac{\partial Q_{\gamma(Q)}}{\partial z_{1}}(w^{0}) \\
      \frac{\partial Q_{\gamma(Q)}}{\partial z_{2}}(w^{0}) \\
      \vdots \\
      \frac{\partial Q_{\gamma(Q)}}{\partial z_{k}}(w^{0})
    \end{array}
  \right).
\end{align}
We set 
\begin{align}\label{ai}
J:=\{1\leq j\leq k\ |\ A_{j}\neq 0\},\mbox{\ and}\\
j_{0}:=\min J \mbox{\ (when $J\neq \emptyset$)}.\label{jzero}
\end{align}
If $J \not= \emptyset$ and 
$A_{j}\neq 0$ for $j \in J$, by (\ref{iden3}) and (\ref{iden4}),
we have
\begin{align}\label{jisuu}
e+l-\nu_{j}=&2e+\delta+\nu_{j},\mbox{\ and}\\
\ov{A_{j}}=&Q_{e}^2\ov{\lambda_{0}}\ov{w^{0}_{j}}.\label{cj}
\end{align}
Therefore, we have 
\[\nu_{j}=\dfrac{1}{2}(-e+l-\delta)\]
and in particular
$\nu_{j}=\nu_{j_{0}}\ (j\in J)$.
Moreover, since $e\geq l$ and $\delta>0$,
we have 
\begin{align}
\nu_{j}<0 \label{nuineq}
\end{align}
for such $j$.

\begin{lem}\label{NZlem5}
If $J\neq \emptyset$, we have the equality
\begin{align}\label{iden5}
Q_{e}^2\nu_{j_{0}}\ov{\lambda_{0}}\sum_{j\in J}|w^{0}_{j}|^2=
\wt{Q_{e}}d_{P}^{\nu}P_{\gamma(P)}(w^{0})
-\wt{P_{e}}d_{Q}^{\nu}Q_{\gamma(Q)}(w^{0}).
\end{align}
In particular, the right hand side of (\ref{iden5}) is not $0$.
\end{lem}

\begin{proof}[Proof of Lemma~\ref{NZlem5}]
Assume $J\neq \emptyset$. 
By Euler's equality for quasi-homogeneous polynomials, we have
\begin{align}\label{EulP}
\sum_{1\leq j\leq k}\nu_{j}w^{0}_{j}
\dfrac{\partial P_{\gamma(P)}}{\partial z_{j}}(w^{0})
=&d_{P}^{\nu}P_{\gamma(P)}(w^{0}), \mbox{\ and}\\
\sum_{1\leq j\leq k}\nu_{j}w^{0}_{j}
\dfrac{\partial Q_{\gamma(Q)}}{\partial z_{j}}(w^{0})
=&d_{Q}^{\nu}Q_{\gamma(Q)}(w^{0}).\label{EulQ}
\end{align}
Then we have
\begin{align}\sum_{j\in J}w_{j}^{0}\nu_{j}\ov{A_{j}}
=&\sum_{1\leq j\leq k}w_{j}^{0}\nu_{j}\ov{A_{j}}\notag\\
=&\sum_{1\leq j\leq k}w_{j}^{0}\nu_{j} \Bigl\{ 
\wt{Q_{e}}\dfrac{\partial P_{\gamma(P)}}{\partial 
z_{j}}(w^{0})-\wt{P_{e}}\dfrac{\partial 
Q_{\gamma(Q)}}{\partial z_{j}}(w^{0}) \Bigr\} \notag \\
=&\wt{Q_{e}}d_{P}^{\nu}P_{\gamma(P)}(w^{0})-
\wt{P_{e}}d_{Q}^{\nu}Q_{\gamma(Q)}(w^{0}) 
\quad (\mbox{by (\ref{EulP}) and (\ref{EulQ})}). \label{sumI1}
\end{align}
On the other hand,
by (\ref{cj}), we have
\begin{align}\label{sumI2}
\sum_{j\in J}w_{j}^{0}\nu_{j}\ov{A_{j}}=
Q_{e}^2\nu_{j_{0}}\ov{\lambda_{0}}\sum_{j\in J}|w^{0}_{j}|^2.
\end{align}
Combining (\ref{sumI1}) and (\ref{sumI2}),
we obtain the desired equality.

The second assertion follows from the facts: 
$Q_{e}\neq 0$, $\lambda_{0}\neq 0$, $w^{0}_{j}\neq 0$ and (\ref{nuineq}).
\end{proof}

Now, let us finish the proof of Theorem~\ref{main2}.
\\\\
\noindent (\textbf{Case 1}) We first 
assume that $P_{\gamma(P)}(w^{0})\neq 0$ and $Q_{\gamma(Q)}(w^{0})\neq 0$.
In this case,
we have $e=e_{P}=d_{P}^{\nu}$ and $e=e_{Q}=d_{Q}^{\nu}$, and hence
\[l=d_{P}^{\nu}=d_{Q}^{\nu} \mbox{\ and\ } d_{f}^{\nu}=0.\]
Therefore, we have
\begin{align*}
(\mbox{RHS\ of (\ref{iden5})})
&=d_{P}^{\nu} \Bigl\{ \wt{Q_{e}}P_{\gamma(P)}(w^{0})
-\wt{P_{e}}Q_{\gamma(Q)}(w^{0}) \Bigr\} 
 \mbox{\quad (since $d_{P}^{\nu}=d_{Q}^{\nu}$)}\\
&=0 \mbox{\quad (since $\wt{P_{e}}=P_{\gamma(P)}
(w^{0})$ and $\wt{Q_{e}}=Q_{\gamma(Q)}(w^{0})$)}.
\end{align*}
If $J\neq \emptyset$,
this contradicts the second assertion of Lemma~\ref{NZlem5}.
Therefore, 
we have $J= \emptyset$ i.e. $A_{j}=0\ (1\leq j\leq k)$.
Moreover, for $1\leq j\leq k$,
we have
\[\ov{A_{j}}=Q_{\gamma(Q)}(w_{0})\dfrac{\partial 
P_{\gamma(P)}}{\partial z_{j}}(w^{0})-P_{\gamma(P)}(
w^{0})\dfrac{\partial Q_{\gamma(Q)}}{\partial 
z_{j}}(w^{0}), \mbox{\ and hence}
\]
\[\dfrac{\partial f_{\gamma}}{\partial z_{j}}(w^{0})=
\dfrac{\ov{A_{j}}}{{Q_{\gamma(Q)}^2(w^{0})}}=0.\]
Therefore, we have $w^{0}\in \Sing{f_{\gamma}}$.
Since $\nu \in \RR^n \setminus \RR^n_{\geq 0}$ the 
face $\gamma$ is of type I or II. But 
Lemma~\ref{type2sing} implies 
that $\gamma$ is of 
type~I and hence 
\begin{align*}
s_0 = \lim_{t\to 0}f(h(t))
= f_{\gamma}(w^0)\in f_{\gamma}(\Sing{f_{\gamma}})
\end{align*}
is contained in the right hand side of (\ref{main2cont}). 
\\\\
\noindent (\textbf{Case 2}) Next, we assume that 
$P_{\gamma(P)}(w^{0})= 0$ and $Q_{\gamma(Q)}(w^{0})\neq 0$.
In this case, we have $e=e_{P}>d^{\nu}_{P}$ and 
$e=e_{Q}=d_{Q}^{\nu}$ and hence
\[l=d^{\nu}_{P}<d^{\nu}_{Q}.\] 
Moreover by $d_f^{\nu} \not= 0$ the face $\gamma$ is 
of type II. 
Therefore, for $1\leq j\leq k$
we have
\[
\ov{A_{j}}=Q_{\gamma(Q)}(w^0)\dfrac{\partial 
P_{\gamma(P)}}{\partial z_{j}}(w^{0}).
\]
Since $P_{\gamma(P)}(w^0)=0$ and $\gamma$ is of type~II,
by the non-degeneracy condition (Definition~\ref{nondeg}),
$\frac{\partial P_{\gamma(P)}}{\partial z_{j}}(w^0)\neq 0$ 
for some $1\leq j\leq k$.
Hence, $J$ is not empty.
On the other hand, in this case we have
\begin{align*}
(\mbox{RHS\ of (\ref{iden5})})=Q_{\gamma(Q)}(w^{0})
d_{P}^{\nu}P_{\gamma(P)}(w^0)=0.
\end{align*}
But, this contradicts the second assertion 
of Lemma~\ref{NZlem5}.
\\\\
\noindent (\textbf{Case 3}) Similarly, 
we assume that $P_{\gamma(P)}(w^{0})\neq  0$ and $Q_{\gamma(Q)}(w^{0})= 0$.
In this case, we have $e=e_{P}=d^{\nu}_{P}$ 
and $e=e_{Q}>d_{Q}^{\nu}$ and hence
\[l=d^{\nu}_{Q}<d^{\nu}_{P}.\]
Moreover by $d_f^{\nu} \not= 0$ the face $\gamma$ is 
of type II. 
Therefore, for $1\leq j\leq k$
we have
\[\ov{A_{j}}=-P_{\gamma(P)}(w^{0})\dfrac{\partial 
Q_{\gamma(Q)}}{\partial z_{j}}(w^0).\]
Since $Q_{\gamma(Q)}(w^0)=0$ and $\gamma$ is of type~II,
by the non-degeneracy condition,
$\frac{\partial Q_{\gamma(Q)}}{\partial z_{j}}(w^0)
\neq 0$ for some $1\leq j\leq k$.
Hence, $J$ is not empty.
On the other hand, we have
\[
(\mbox{RHS of (\ref{iden5})})=-P_{\gamma(P)}(
w_{0})d_{Q}^{\nu}Q_{\gamma(Q)}(w^0)=0.
\]
But, this contradicts the second assertion of Lemma~\ref{NZlem5}.
\\\\
\noindent (\textbf{Case 4}) Finally, we 
assume that $P_{\gamma(P)}(w^{0})=  0$ and $Q_{\gamma(Q)}(w^{0})= 0$.
In this case, we have $e=e_{P}>d^{\nu}_{P}$ and $e=e_{Q}>d^{\nu}_{Q}$. 
Since $\nu \in \RR^n \setminus \RR^n_{\geq 0}$ the 
face $\gamma$ is of type I or II. Then 
by $P_{\gamma(P)}(w^{0})=  0$, $Q_{\gamma(Q)}(w^{0})= 0$ and 
the non-degeneracy condition, the complex vectors 
$\grad{P_{\gamma(P)}}(w^0)$ and $\grad{Q_{\gamma(Q)}}(w^{0})$
are linearly independent.
Therefore, by \eqref{key} we get $J\neq \emptyset$. 
On the other hand,
we have
\[(\mbox{RHS of (\ref{iden5})})=\wt{Q_{e}}
d_{P}^{\nu}P_{\gamma(P)}(w^{0})-\wt{P_{e}}
d_{Q}^{\nu}Q_{\gamma(Q)}(w^{0})=0.
\]
But, this contradicts the second assertion of Lemma~\ref{NZlem5}.

This completes the proof.
\end{proof}

 Combining Theorems \ref{NST} and \ref{main2}, 
we obtain Theorem \ref{MTM}. We will 
consider the following condition:
\begin{align}\tag{$*$}
\mbox{For any vector $u\in \RR^n\setminus 
\RR^n_{\geq 0}$, we have $d^{u}_{Q}\geq d^{u}_{P}$.}
\end{align}
It is satisfied if $P(0) \not= 0$, 
$Q(0) \not= 0$ and $\Nt(Q) \subset \Nt(P)$. 
This is the case in particular 
when $Q(z)=1$ (i.e. $f(z)=P(z)$ 
is a polynomial) and $P(0)=f(0) \not= 0$. 

\begin{thm}\label{conthm} 
In the situation in Theorem~\ref{MTM},
assume moreover the condition ($*$).
Then we have
\[\Bif_{f}\subset f(\Sing{f})\cup \Bigl(
\bigcup_{\gamma\in \mathscr{F}_{\mathrm{I}}}
f_{\gamma}(\Sing{f_{\gamma}}) \Bigr).\] 
\end{thm}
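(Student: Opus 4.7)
The plan is to reduce to the setup of Theorem~\ref{main2} and refine the one case in that proof which produces the value $s_0=0$. By Theorems~\ref{NST} and \ref{main2} we already have
\[
\Bif_f \subset f(\Sing f) \cup \{0\} \cup \bigcup_{\gamma\in\Fone} f_\gamma(\Sing{f_\gamma}),
\]
so the task reduces to showing that under $(*)$, whenever $0\in\Sim_f$ we in fact have $0\in\bigcup_{\gamma\in\Fone} f_\gamma(\Sing{f_\gamma})$.

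Assume $0\in\Sim_f$ and apply the curve selection lemma exactly as in the proof of Theorem~\ref{main2} to produce the analytic curve $h(t)$, the vector $\nu$, the face $\gamma=\gamma^\nu_f$ and the point $w^0$. Since $\nu_1=\alpha<0$, we have $\nu\in\RR^n\setminus\RR^n_{\geq 0}$, so condition $(*)$ gives $d^\nu_Q\geq d^\nu_P$. The case $e_P=e_Q$ is already handled by Theorem~\ref{main2}, leaving only $e_P>e_Q$. A quick comparison rules out $P_{\gamma(P)}(w^0)\neq 0$: indeed, this would force $e_P=d^\nu_P\leq d^\nu_Q\leq e_Q$, contradicting $e_P>e_Q$. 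Hence $P_{\gamma(P)}(w^0)=0$ necessarily.

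The main step will be to redo the leading-term analysis of identity~\eqref{iden4} in this regime. Because $e_P+d^\nu_Q>e_Q+d^\nu_P$ under $(*)$ and $e_P>e_Q$, the term $\overline{Q(h(t))}\grad P(h(t))$ strictly dominates $\overline{P(h(t))}\grad Q(h(t))$ in every $j$-th component. For $j\leq k$ with $\partial P_{\gamma(P)}/\partial z_j(w^0)\neq 0$, matching leading degrees and leading coefficients with the right-hand side $\overline{Q(h(t))^2}\lambda(t)h_j(t)$ will force
\[
\nu_j=\frac{d^\nu_P-e_Q-\delta}{2}<0 \qquad\text{and}\qquad \dfrac{\partial P_{\gamma(P)}}{\partial z_j}(w^0)=Q_{e_Q}\,\overline{\lambda_0}\,\overline{w_j^0},
\]
where $Q_{e_Q}$ denotes the (nonzero) leading coefficient of $Q(h(t))$. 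Setting $J_1:=\{1\leq j\leq k : \partial P_{\gamma(P)}/\partial z_j(w^0)\neq 0\}$ and arguing as in Lemma~\ref{NZlem5}, now with Euler's identity applied to $P_{\gamma(P)}$ (which vanishes at $w^0$), I expect to arrive at
\[
Q_{e_Q}^2\,\overline{\lambda_0}\,\nu_{j_0}\sum_{j\in J_1}|w_j^0|^2=0.
\]
Since $Q_{e_Q},\lambda_0\neq 0$ and $\nu_{j_0}<0$, this forces $J_1=\emptyset$, i.e., $\grad P_{\gamma(P)}(w^0)=0$.

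The conclusion then splits along the value of $Q_{\gamma(Q)}(w^0)$: if $Q_{\gamma(Q)}(w^0)=0$, then $w^0\in P^{-1}_{\gamma(P)}(0)\cap Q^{-1}_{\gamma(Q)}(0)$ with $\grad P_{\gamma(P)}(w^0)=0$ contradicts the linear independence required by Definition~\ref{nondeg}; if $Q_{\gamma(Q)}(w^0)\neq 0$ but $\gamma$ is of type~II, the same vanishing $\grad P_{\gamma(P)}(w^0)=0$ on $P^{-1}_{\gamma(P)}(0)\setminus Q^{-1}_{\gamma(Q)}(0)$ again contradicts Definition~\ref{nondeg}. The only surviving subcase is $Q_{\gamma(Q)}(w^0)\neq 0$ with $\gamma\in\Fone$; there a direct computation gives $\grad f_\gamma(w^0)=\grad P_{\gamma(P)}(w^0)/\overline{Q_{\gamma(Q)}(w^0)}=0$ while $f_\gamma(w^0)=0$, placing $0=s_0 \in f_\gamma(\Sing f_\gamma)\subset\bigcup_{\gamma'\in\Fone}f_{\gamma'}(\Sing{f_{\gamma'}})$, as desired. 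The hardest step will be the degree bookkeeping in the dominant-term analysis above—this was trivially sidestepped in Theorem~\ref{main2}'s proof by absorbing $s_0=0$ into the statement—but the asymmetry $d^\nu_Q\geq d^\nu_P$ imposed by $(*)$ forces one of the two terms of~\eqref{iden4} to strictly dominate, reducing the matching to a single clean Euler relation.
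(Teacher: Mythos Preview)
Your proposal is correct and follows essentially the same approach as the paper: both reduce to the curve $h(t)$ from Theorem~\ref{main2}, use condition~($*$) together with $e_P>e_Q$ to force $P_{\gamma(P)}(w^0)=0$ and to isolate the $\overline{Q}\,\grad P$ term as the lowest-order contribution in~\eqref{iden4}, and then apply Euler's identity for $P_{\gamma(P)}$ to reach the decisive contradiction. The only structural difference is that the paper argues by contradiction (assuming $0\notin\bigcup_{\gamma\in\Fone}f_\gamma(\Sing f_\gamma)$ and showing $J\neq\emptyset$ in every case, then contradicting Euler), whereas you argue directly (Euler forces $J_1=\emptyset$, then the non-degeneracy case split finishes); these are contrapositives of one another, and your explicit inequality $e_P+d_Q^\nu>e_Q+d_P^\nu$ is exactly what the paper uses implicitly.
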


\begin{proof}
Assume that a point $s_0 \in \Sim_{f}\setminus f(\Sing{f})$ 
is not contained in $\cup_{\gamma\in \mathscr{F}_{\mathrm{I}}}
f_{\gamma}(\Sing{f_{\gamma}})$. 
It is enough to get a contradiction only for $s_0=0$. 
Let us assume $s_0=0$. 
We will use the notations and the results before 
($\bigstar$) in the proof of Theorem~\ref{main2}.
Then, we have $e_{P}>e_{Q}$.
Therefore, if $P_{\gamma(P)}(w^{0})\neq 0$, 
we have $e_{P}=d_{P}^{\nu}$ and hence 
$d_{P}^{\nu}>e_{Q}\geq d^{\nu}_{Q}$,
which contradicts the condition ($*$). 
Therefore, we have 
\begin{align*}
P_{\gamma(P)}(w^{0})=0. 
\end{align*}
By the condition ($*$),
for $1\leq j\leq k$ the degree of the $j$-th 
entry of the left hand side of (\ref{iden4}) 
is larger than or equal to $e_{Q}+d_{P}^{\nu}-\nu_{j}$. 
Let $A_j \in \CC$ be the 
coefficient of $t^{e_{Q}+d_{P}^{\nu}-\nu_{j}}$ in it. 
Then its complex conjugate $\ov{A_{j}}$ 
is expressed as 
\[\ov{A_{j}}=Q_{e_{P}}\dfrac{\partial 
P_{\gamma(P)}}{\partial z_{j}}(w^{0}).\]
We define $J$ and $j_{0}$ as (\ref{ai}) 
and (\ref{jzero}). 
Since $\nu \in \RR^n \setminus \RR^n_{\geq 0}$ the 
face $\gamma$ is of type I or II. 
If $\gamma$ is of type I, $Q_{\gamma(Q)}(w^0) \not= 0$ 
and $J= \emptyset$, we have $w^0 \in \Sing{f_{\gamma}}$ 
and 
\begin{align*}
s_0 =0 =f_{\gamma}(w^0) \in 
f_{\gamma}(\Sing{f_{\gamma}}). 
\end{align*}
This is a contradiction. So, in the case where 
$\gamma$ is of type I and $Q_{\gamma(Q)}(w^0) \not= 0$, 
we have $J \not= \emptyset$. Also in the other 
cases (where $\gamma$ is of type~II or 
$P_{\gamma(P)}(w^{0})=Q_{\gamma(Q)}(w^{0})=0$), 
by $P_{\gamma(P)}(w^{0})= 0$ and 
the non-degeneracy condition 
we have $\frac{\partial 
P_{\gamma(P)}}{\partial z_{j}}(w^{0})\neq 0$ 
for some $1 \leq j \leq k$ and hence $J\neq \emptyset$.
Similarly to the argument in the proof of Theorem~\ref{main2}, 
by using $e_Q \geq d_Q^{\nu} \geq d_P^{\nu}$ 
we obtain $\nu_{j}=\nu_{j_{0}}$ for any 
$j\in J$ and $\nu_{j_{0}}<0$.
Moreover,
in this situation, 
we have an equality similar to (\ref{iden5}):
\begin{align*}
Q_{e_{Q}}\nu_{j_{0}}\sum_{j\in J}|w_{j}^{0}|^2=
d_{P}^{\nu}P_{\gamma(P)}(w^{0}).
\end{align*}
The right hand side is $0$.
Since the left hand side is not zero,
this is a contradiction.
\end{proof}

\begin{cor} (N\'emethi and Zaharia \cite[Theorem 2]{N-Z1}) 
In the situation in Theorem~\ref{MTM}, 
assume moreover that 
$Q(z)=1$ (i.e. $f(z)=P(z)$ 
is a polynomial) and $P(0)=f(0) \not= 0$. 
Then we have
\[\Bif_{f}\subset f(\Sing{f})\cup \Bigl(
\bigcup_{\gamma\in \mathscr{F}_{\mathrm{I}}}
f_{\gamma}(\Sing{f_{\gamma}}) \Bigr).\] 
\end{cor}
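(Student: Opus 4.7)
The plan is to deduce the corollary as an immediate specialization of Theorem~\ref{conthm}. Since the phrase ``In the situation in Theorem~\ref{MTM}'' already incorporates the normal crossing and non-degeneracy hypotheses, the only thing I must do is verify that the additional condition $(*)$ of Theorem~\ref{conthm} holds under the extra assumptions $Q(z)=1$ and $P(0)=f(0)\neq 0$.

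First I would compute $d^{u}_{Q}$. Because $Q(z)=1$ is a constant, $\Nt(Q)=\{0\}\subset\RR^n$, and hence
\[
d^{u}_{Q}=\min_{w\in \Nt(Q)}\langle u,w\rangle = \langle u,0\rangle =0
\]
for every $u\in \RR^n$. Next I would observe that, since $P(0)\neq 0$, the origin belongs to $\supp(P)$ and therefore $0\in \Nt(P)$; this forces
\[
d^{u}_{P}=\min_{w\in \Nt(P)}\langle u,w\rangle\leq \langle u,0\rangle=0
\]
for every $u\in \RR^n$. Combining these two observations gives $d^{u}_{Q}=0\geq d^{u}_{P}$ for all $u\in \RR^n$, in particular for every $u\in \RR^n\setminus \RR^n_{\geq 0}$. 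Thus the condition $(*)$ is automatically satisfied.

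With $(*)$ verified, a direct invocation of Theorem~\ref{conthm} yields the claimed inclusion
\[
\Bif_{f}\subset f(\Sing{f})\cup \Bigl(\bigcup_{\gamma\in \mathscr{F}_{\mathrm{I}}} f_{\gamma}(\Sing{f_{\gamma}})\Bigr).
\]
There is essentially no obstacle: the whole argument is a one-line Newton-polytope check, and the corollary reduces entirely to the preceding theorem. The only point worth flagging is that the normal crossing hypothesis of Theorem~\ref{MTM} is vacuously satisfied in this setting because $Q^{-1}(0)=\emptyset$, and the non-degeneracy condition of Definition~\ref{nondeg} collapses to the classical Kouchnirenko condition on $P$ (the conditions involving $Q_{\gamma(Q)}^{-1}(0)$ disappear since $Q_{\gamma(Q)}\equiv 1$), so the hypotheses of Theorem~\ref{conthm} are genuinely in force.
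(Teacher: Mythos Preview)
Your proof is correct and follows exactly the route indicated by the paper: the text immediately preceding Theorem~\ref{conthm} already remarks that condition~($*$) holds when $Q(z)=1$ and $P(0)\neq 0$ (since then $\Nt(Q)=\{0\}\subset \Nt(P)$), and the corollary is then a direct specialization of Theorem~\ref{conthm}. Your explicit computation of $d^u_Q=0\ge d^u_P$ just spells out this observation.
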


In this corollary, for the face $\gamma = \{ 0 \} 
\prec \Nt(f)$ of type I we have 
$\gamma(P)= \gamma(Q)= \{ 0 \}$, 
$f_{\gamma}(z)=f(0) \not= 0$ and 
\begin{align*}
f_{\gamma}(\Sing{f_{\gamma}}) = 
\{ f(0) \}. 
\end{align*}

\section{The two dimensional case and examples}\label{2-dim case}

In this section, we show that in the two dimensional case $n=2$ 
the inclusion 
\begin{align*}
\Bif_{f}\subset f(\Sing{f})\cup \{0\}\cup 
\Bigl( \bigcup_{\gamma\in 
\mathscr{F}_{\mathrm{I}}}f_{\gamma}(\Sing{f_{\gamma}}) \Bigr)
\end{align*}
in Theorem \ref{MTM} is indeed an equality outside a finite 
subset of $\CC$ and give some examples. Let 
$\gamma \prec \Nt(f)$ be a $0$-dimensional face of type I. Then 
$\gamma (P) \prec \Nt(P)$ and $\gamma (Q) \prec \Nt(Q)$ are 
also $0$-dimensional, $\gamma (P)= \gamma (Q)$ and 
\begin{align*}
f_{\gamma}=
\frac{P_{\gamma (P)}}{Q_{\gamma (Q)}}: 
T \setminus Q_{\gamma (Q)}^{-1}(0) \longrightarrow \CC 
\end{align*}
is a non-zero constant function on $T \setminus Q_{\gamma (Q)}^{-1}(0)
=T$ (here $Q_{\gamma (Q)}$ is a monomial). 
We denote its value by $c( \gamma ) \in \CC$. Then we define 
a subset $\CF_f \subset \CC$ by 
\begin{align*}
\CF_f:= \{ c( \gamma ) \in \CC \ | \ 
\gamma \in \mathscr{F}_{\mathrm{I}}, \ \dim \gamma =0 \} \subset \CC. 
\end{align*}

\begin{thm}\label{main-3}
In the situation of Theorem \ref{MTM}, assume moreover 
that $n=2$. 
Then we have an equality 
\begin{align}\label{main-3cont}
\Bif_{f} \setminus ( \{ 0 \} \cup \CF_f) = 
\Bigl\{ f(\Sing{f}) \cup 
\Bigl( \bigcup_{\gamma\in 
\mathscr{F}_{\mathrm{I}}}f_{\gamma}(\Sing{f_{\gamma}}) \Bigr) 
\Bigr\} \setminus ( \{ 0 \} \cup \CF_f).
\end{align}
\end{thm}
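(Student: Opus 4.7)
The plan is to prove the reverse inclusion, since the inclusion $\subset$ in the statement follows from Theorem~\ref{MTM}. Explicitly, one must show
\begin{align*}
\Bigl\{ f(\Sing{f}) \cup \Bigl( \bigcup_{\gamma \in \mathscr{F}_{\mathrm{I}}} f_\gamma(\Sing{f_\gamma}) \Bigr) \Bigr\} \setminus (\{0\} \cup \CF_f) \subset \Bif_f.
\end{align*}
The inclusion $f(\Sing f) \subset \Bif_f$ is immediate from the definition of the bifurcation set, so the real content is to show that for every face $\gamma \in \mathscr{F}_{\mathrm{I}}$ and every value $c \in f_\gamma(\Sing{f_\gamma}) \setminus (\{0\} \cup \CF_f)$ one has $c \in \Bif_f$.

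First I would dispose of the $0$-dimensional faces of type~I: for such a $\gamma$ one has $\gamma(P) = \gamma(Q)$ as points of $\RR^2$, so $f_\gamma$ is the non-zero constant $c(\gamma)$ on the torus $T$ and hence $f_\gamma(\Sing{f_\gamma}) = \{c(\gamma)\} \subset \CF_f$, which has been excluded. Thus only $1$-dimensional (edge) type~I faces remain. Fix such an edge $\gamma$, a supporting vector $u \in \RR^2 \setminus \RR^2_{\geq 0}$ with $\gamma_f^u = \gamma$, and a point $w^0 \in \Sing{f_\gamma}$ with $c = f_\gamma(w^0) \notin \{0\} \cup \CF_f$.

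The geometric step is to construct a smooth projective toric surface $X \supset \CC^2$ whose fan refines the outward normal fan of $\Nt(f)$; the edge $\gamma$ then corresponds to a toric divisor $D_u \simeq \PP^1$ of $X \setminus \CC^2$. Since $\gamma$ is of type~I, i.e.\ $d_f^u = 0$, the meromorphic extension of $f$ to $X$ restricts along $D_u$ to (the quotient by the one-parameter subgroup generated by $u$ of) the Laurent function $f_\gamma$. After resolving any remaining points of indeterminacy by a sequence of blow-ups $\pi \colon \widetilde{X} \to X$, one obtains a proper morphism $g = f \circ \pi \colon \widetilde{X} \to \PP^1$, and $w^0$ determines a point $p$ on the open $\CC^*$-orbit of (the strict transform of) $D_u$ with $g(p) = c$; the condition $w^0 \in \Sing{f_\gamma}$ translates into the fibre $g^{-1}(c)$ acquiring a local singularity, or meeting the divisor at infinity non-transversally, at~$p$.

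Using the non-degeneracy hypothesis of Definition~\ref{nondeg} together with $c \neq 0$, I would show that this local degeneration is isolated and contributes a strictly positive local Milnor number, so that the Euler characteristic $\chi(f^{-1}(c))$ differs from that of $f^{-1}(s)$ for generic $s$ close to $c$. Since in the two-dimensional case the fibres are curves, the classical criterion of H\`a--L\^e and Suzuki ensures that any such discontinuity outside $f(\Sing f)$ forces $c \in \Bif_f$. I expect the main obstacle to be the verification that $p$ is indeed isolated as a critical point of $g$: a positive-dimensional critical locus of $g$ along $D_u$ arises precisely when $f_\gamma$ is constant equal to $c$ on a sub-torus, i.e.\ when $c \in \CF_f$, which is exactly why this finite exceptional set has to be subtracted. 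A secondary issue, the possible cancellation of Euler-characteristic contributions coming from distinct edges sharing the same value $c$, is controlled by the non-degeneracy condition and the combinatorics of the normal fan of $\Nt(f)$ already exploited in the proofs of Theorems~\ref{MTM} and \ref{conthm}.
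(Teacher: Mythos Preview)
Your overall strategy coincides with the paper's: dispose of $0$-dimensional type~I faces as giving values in $\CF_f$, pass to a smooth toric compactification $X_\Sigma$ whose fan refines the dual fan of $\Nt(f)$, blow up to obtain a proper map $g\colon\widetilde{X_\Sigma}\to\PP^1$, and detect $c\in\Bif_f$ through a jump in the Euler characteristic of the fibres. The paper packages this jump as the stalk of $\phi_h(Rf_!\CC_{\CC^2\setminus Q^{-1}(0)})$ at $s_0=c$ and uses proper base change for $g$ to localise the computation to finitely many points $q$ lying on the open orbits $T_{\sigma(\gamma)}$ for $1$-dimensional type~I faces $\gamma$.

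The genuine gap in your sketch is the handling of what you call the ``secondary issue''. The non-degeneracy hypothesis and the arguments of Theorems~\ref{MTM} and~\ref{conthm} furnish only the \emph{upper} bound on $\Bif_f$; they say nothing about signs of Euler-characteristic contributions and cannot by themselves rule out cancellation between distinct points $q$ lying over the same value $c$. The paper resolves this by an explicit local computation: in a neighbourhood of each such $q$ the sequence
\[
0\longrightarrow \CC_{\CC^2\setminus Q^{-1}(0)}\longrightarrow \CC_{\widetilde{X_\Sigma}}\longrightarrow \CC_{T_{\sigma(\gamma)}}\longrightarrow 0
\]
is exact, and applying $\phi_{h\circ g}$ one finds
\[
\chi\bigl(\phi_{h\circ g}(\iota_!\CC_{\CC^2\setminus Q^{-1}(0)})_q\bigr)=-\mu_q-(m_q-1)<0,
\]
where $\mu_q\geq 0$ is the Milnor number of the curve $g^{-1}(s_0)$ at $q$ and $m_q\geq 2$ is the vanishing order of $f_\gamma-s_0$ at $q$. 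Every local contribution therefore has the same sign, so they add rather than cancel, and $E_f(s_0)>0$. Note in particular that your phrase ``strictly positive local Milnor number'' is not what is actually established: $\mu_q$ may well vanish (the total fibre $g^{-1}(s_0)$ can be smooth at $q$), and it is the term $m_q-1\geq 1$, coming from the tangency of $g^{-1}(s_0)$ with the boundary torus $T_{\sigma(\gamma)}$, that guarantees a nonzero contribution at every such point.
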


\begin{proof}
We follow the proof of \cite[Theorem 4.3]{T}. 
Since $f( {\rm Sing} f) \subset \Bif_f$, 
it suffices to show the inclusion 
\begin{align*}
\Bigl( \bigcup_{\gamma\in 
\mathscr{F}_{\mathrm{I}}}f_{\gamma}(\Sing{f_{\gamma}}) \Bigr) 
\setminus \left( f(\Sing{f}) \cup \{ 0 \} \cup \CF_f \right) 
\subset \Bif_{f}.
\end{align*}
Let $s_0 \in \CC$ be a point in the left hand side. 
We define a $\ZZ$-valued function 
$\chi_c: \CC \longrightarrow \ZZ$ on $\CC$ by 
\begin{equation*}
\chi_c(s)= \sum_{j \in \ZZ} (-1)^j \dim 
H^j_c ( f^{-1}(s); \CC ) \qquad (s \in \CC )
\end{equation*}
and its jump $E_f( \sigma ) \in \ZZ$ 
at $s_0 \in \CC$ by 
\begin{equation*}
E_f( s_0 )= - \left\{ \chi_c( s_0 + \varepsilon ) 
- \chi_c( s_0 ) \right\} \in \ZZ,  
\end{equation*}
where $\varepsilon >0$ is sufficiently small. 
Then it is enough to show that $E_f( s_0 ) \not= 0$. 
From now, we will use the terminologies in 
\cite{Dimca}, \cite{H-T-T} and \cite{K-S} etc. 
For the point $s_0 \in \CC$ define a 
function $h: \CC \longrightarrow 
\CC$ on $\CC$ by $h(s)=s- s_0$ so that 
we have $h^{-1}(0)= \{ s_0 \}$. Then 
we have 
\begin{equation*}
E_f( s_0 )= - 
\sum_{j \in \ZZ} (-1)^j \dim 
H^j \phi_h(R f_! \CC_{\CC^2 \setminus Q^{-1}(0)})_{s_0},  
\end{equation*}
where $\phi_h:  \mathrm{D^{b}_{c}} (\CC) \longrightarrow 
\mathrm{D^{b}_{c}} ( \{ s_0 \} )$ is Deligne's vanishing cycle 
functor associated to $h$. 
Now we introduce an equivalence relation $\sim$ on 
(the dual vector space of) $\RR^2$ by 
$u \sim u^{\prime} \Longleftrightarrow 
\gamma_f^u = \gamma_f^{u^{\prime}}$. We can easily 
see that for any face $\gamma \prec \Nt(f)$ 
of $\Nt(f)$ the closure of the 
equivalence class associated to it in $\RR^2$ 
is an $(2- \dim \gamma )$-dimensional rational 
convex polyhedral cone $\sigma (\gamma )$ in $\RR^2$. Moreover 
the family $\{ \sigma (\gamma ) \ | \ 
\gamma \prec  \Nt(f) \}$ of cones in $\RR^2$ 
thus obtained is a subdivision of $\RR^2$. 
We call it the dual subdivision of $\RR^2$ by 
$\Nt(f)$. If $\dim \Nt(f)=2$ it 
satisfies the axiom of fans (see \cite{Fulton} and 
\cite{Oda} etc.). We call it the dual fan of 
$\Nt(f)$. Let $\Sigma_0$ be a complete fan in $\RR^2$ 
obtained by subdividing the 
dual subdivision. Note that all the cones in it 
are proper and convex. 
Let $\Sigma$ be a smooth and complete fan in $\RR^2$ 
containing all the $1$-dimensional cones $\tau 
\simeq \RR^1_{\geq 0}$ in $\Sigma_0$ 
such that $\tau \cap \RR^2_{\geq 0} = \{ 0 \}$ and 
satisfying the condition 
$\RR^2_{\geq 0} \in \Sigma$. Let 
$X_{\Sigma}$ be the toric variety 
associated to it. Then $X_{\Sigma}$ is a smooth 
compactification of $\CC^2$. 
This construction of $X_{\Sigma}$ 
is inspired from the one in Zaharia \cite{Zaharia}. 
Recall that the torus $T=( \CC^*)^2$ acts on 
$X_{\Sigma}$ and the $T$-orbits in it are 
parametrized by the cones $\tau$ in $\Sigma$. 
For a cone $\tau \in \Sigma$ denote 
by $T_{\tau} \simeq (\CC^*)^{2-\dim 
\tau}$ the corresponding $T$-orbit. If 
$\tau \in \Sigma$ is not contained in $\RR^2_{\geq 0}$ 
and its relative interior is contained in that of 
the cone $\sigma (\gamma )$ for a type II face $\gamma$ of 
$\Nt (f)$, then by the non-degeneracy condition 
the closures $\overline{P^{-1}(0)}, \overline{Q^{-1}(0)} 
\subset X_{\Sigma}$ 
of $P^{-1}(0), Q^{-1}(0) \subset \CC^2$ 
respectively in $X_{\Sigma}$  
intersect $T_{\tau}$ transversally. 
At such intersection points, (the meromorphic 
extension) of $f$ to $X_{\Sigma}$ may have indeterminacy. 
Moreover for $n=2$ we have 
\begin{align*}
(\overline{P^{-1}(0)} \cap T_{\tau}) \cap 
(\overline{Q^{-1}(0)} \cap T_{\tau}) = \emptyset. 
\end{align*}
If $\tau \in \Sigma$ is not contained in $\RR^2_{\geq 0}$ 
and its relative interior is contained in that of 
the cone $\sigma (\gamma )$ for a type I face $\gamma$ of 
$\Nt (f)$ such that $\dim \gamma =1$, 
then the order of the meromorphic extension 
of $f$ to $X_{\Sigma}$ along the $T$-divisor 
$\overline{T_{\tau}} \subset X_{\Sigma}$ 
is zero. Moreover, by the non-degeneracy condition we have 
\begin{align*}
(\overline{P^{-1}(0)} \cap T_{\tau}) \cap 
(\overline{Q^{-1}(0)} \cap T_{\tau}) = \emptyset. 
\end{align*}
As in \cite[Section 3]{T}, 
by constructing a tower of blow-ups $\pi : \widetilde{X_{\Sigma}} 
\longrightarrow X_{\Sigma}$ of $X_{\Sigma}$ 
to eliminate the indeterminacy of $f$ we 
obtain a commutative diagram:
\begin{equation*}
\begin{CD}
\CC^2  \setminus Q^{-1}(0) @>{\iota}>> \widetilde{X_{\Sigma}} 
\\
@V{f}VV   @VV{g}V
\\
\CC  @>>{j}>  \PP^1 
\end{CD}
\end{equation*}
of holomorphic maps, 
where $\iota : \CC^2  \setminus Q^{-1}(0) 
\hookrightarrow \widetilde{X_{\Sigma}}$ and 
$j : \CC  
\hookrightarrow \PP^1$ are 
the inclusion maps and 
$g$ is proper. By this construction, if 
$\tau \in \Sigma$ is not contained in $\RR^2_{\geq 0}$ 
and its relative interior is contained in that of 
the cone $\sigma (\gamma )$ for a type I face $\gamma$ of 
$\Nt (f)$, then $\pi$ induced an isomorphism 
$\pi^{-1}(T_{\tau}) \simeq T_{\tau}$. So we 
regard $T_{\tau}$ as a subset of 
$\widetilde{X_{\Sigma}}$. Since $g$ is proper, 
by \cite[Proposition 4.2.11]{Dimca} and 
\cite[Exercise VIII.15]{K-S} 
we thus obtain an isomorphism 
\begin{equation*}
\phi_h(R  f_! \CC_{\CC^2 \setminus Q^{-1}(0)})_{s_0} \simeq 
R \Gamma (g^{-1}( s_0 ); \phi_{h \circ g}
( \iota_! \CC_{\CC^2 \setminus Q^{-1}(0)})). 
\end{equation*}
By our choice of the point $s_0 \in \CC$, the support 
of $\phi_{h \circ g}( \iota_! \CC_{\CC^2 \setminus Q^{-1}(0)}) 
\in \mathrm{D^{b}_{c}}  (  g^{-1}( s_0 ) )$ 
is contained in the (non-empty) finite subset of 
$g^{-1}( s_0 ) \subset \tl{X_{\Sigma}}$ 
consisting of the points 
$q \in T_{\sigma (\gamma)}$ for 
$1$-dimensional type I faces $\gamma$ of 
$\Nt (f)$ such that $q \in \Sing{f_{\gamma}}$ 
and $s_0 = f_{\gamma}(q)$. 
Here we naturally regard $f_{\gamma}$ as 
a rational function on 
$T_{\sigma (\gamma)} \simeq \CC^*$. In 
a neighborhood of the point 
$q \in T_{\sigma (\gamma)}$ 
it coincides with the restriction of $g$ to 
$T_{\sigma (\gamma)} \subset 
\widetilde{X_{\Sigma}}$. For one 
$q \in T_{\sigma (\gamma)}$ of such points,  
let $\mu_q \geq 0$ be 
the Milnor number of the 
 (possibly singular) complex hypersurface 
 $g^{-1}( s_0 )$ (in fact, it is an 
algebraic curve having at most 
an isolated singular point at $q$) of 
$\widetilde{X_{\Sigma}}$ at $q$. Denote by 
$m_q \geq 2$ the multiplicity of the zeros of 
the function $f_{\gamma}- s_0$ at $q$. Note that 
in a neighborhood of the point $q$ in 
$\widetilde{X_{\Sigma}}$ the sequence 
\begin{align*}
0  \rightarrow \CC_{\CC^2 \setminus Q^{-1}(0)} \rightarrow 
\CC_{\widetilde{X_{\Sigma}}} \rightarrow 
\CC_{T_{\sigma (\gamma)}} \rightarrow 0
\end{align*}
is exact. Then as in the final part 
of the proof of \cite[Theorem 4.3]{T} we obtain 
\begin{align*}
\chi ( \phi_{h \circ g}( \iota_! 
\CC_{\CC^2 \setminus Q^{-1}(0)})_q ) = 
- \mu_q -(m_q-1) <0. 
\end{align*}
Consequently, we get $E_f( s_0 )>0$. 
This completes the proof. 
\end{proof}

By Theorems \ref{conthm} and \ref{main-3} we 
obtain the following result. 

\begin{cor} 
In the situation of Theorem \ref{MTM}, assume moreover 
the condition ($*$) and that $n=2$. 
Then we have an equality 
\begin{align}\label{main-3cont}
\Bif_{f} \setminus  \CF_f = 
\Bigl\{ f(\Sing{f}) \cup 
\Bigl( \bigcup_{\gamma\in 
\mathscr{F}_{\mathrm{I}}}f_{\gamma}(\Sing{f_{\gamma}}) \Bigr) 
\Bigr\} \setminus \CF_f.
\end{align}
\end{cor}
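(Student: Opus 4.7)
The plan is to combine the two theorems just established. First, Theorem~\ref{conthm} gives the $\subset$ direction immediately: under condition~($*$) we already know
\[
\Bif_f \subset f(\Sing f) \cup \bigcup_{\gamma \in \mathscr{F}_{\mathrm{I}}} f_\gamma(\Sing f_\gamma),
\]
so intersecting both sides with $\CC \setminus \CF_f$ yields the desired one-sided containment.

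For the $\supset$ direction, let $s_0$ lie in the right-hand side of \eqref{main-3cont}. The case $s_0 \in f(\Sing f)$ is trivial, and the case $s_0 \in f_\gamma(\Sing f_\gamma)$ for a type~I face $\gamma$ forces $\dim \gamma \geq 1$, because every $0$-dimensional type~I face contributes only the constant value $c(\gamma) \in \CF_f$, which was excluded. When $s_0 \neq 0$, Theorem~\ref{main-3} supplies the conclusion $s_0 \in \Bif_f$ at once, so the only genuinely new point is $s_0 = 0$.

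To handle $s_0 = 0$, I would rerun the vanishing-cycle computation from the proof of Theorem~\ref{main-3} at this value. Set $h(s) := s$ so that $h^{-1}(0) = \{0\}$, build the same smooth toric compactification $X_\Sigma$ refining the dual fan of $\Nt(f)$, and the same tower of blow-ups $\pi : \widetilde{X_\Sigma} \to X_\Sigma$ eliminating the indeterminacy of $f$; then the same proper base change as in Theorem~\ref{main-3} yields
\[
E_f(0) = - \chi\bigl( R\Gamma(g^{-1}(0);\, \phi_{h \circ g}(\iota_!\, \CC_{\CC^2 \setminus Q^{-1}(0)})) \bigr),
\]
and it suffices to prove this integer is strictly positive.

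The main obstacle I anticipate is precisely the reason the previous theorem excluded $\{0\}$: at $s_0 = 0$ the zero-fiber $g^{-1}(0)$ in $\widetilde{X_\Sigma}$ could a~priori pick up entire toric divisors at infinity along which the meromorphic extension of $f$ vanishes identically, producing extra vanishing-cycle contributions that are absent for $s_0 \neq 0$. Condition~($*$), which asserts $d_P^u \leq d_Q^u$ for every $u \in \RR^2 \setminus \RR^2_{\geq 0}$, is exactly the ingredient that rules this out: the order of $f$ along every $T$-divisor coming from such a $u$ is non-positive, so $g$ never vanishes identically on a boundary divisor of $\widetilde{X_\Sigma}$. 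Once this is granted, the support of $\phi_{h \circ g}(\iota_!\,\CC_{\CC^2 \setminus Q^{-1}(0)})$ concentrates, just as in the proof of Theorem~\ref{main-3}, at the isolated points $q \in T_{\sigma(\gamma)}$ with $q \in \Sing f_\gamma$ and $f_\gamma(q) = 0$ for $1$-dimensional type~I faces $\gamma$. The same local estimate $\chi(\phi_{h \circ g}(\iota_!\,\CC)_q) = - \mu_q - (m_q - 1) < 0$ then gives $E_f(0) > 0$, so $0 \in \Bif_f$, completing the proof.
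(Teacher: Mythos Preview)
Your proof is correct and in fact more careful than the paper's own, which consists of the single sentence ``By Theorems \ref{conthm} and \ref{main-3} we obtain the following result.''  The inclusion $\subset$ and the case $s_0\neq 0$ do follow immediately from those two theorems, exactly as you say; the paper appears to regard the remaining case $s_0=0$ as implicit in the same argument, while you spell out why condition~($*$) allows the vanishing-cycle computation of Theorem~\ref{main-3} to go through at $0$.  Your diagnosis is the right one: without~($*$) a type~II direction $u$ with $d_f^u>0$ would make $g$ vanish along the corresponding $T$-divisor, and this is precisely why Theorem~\ref{main-3} had to exclude $\{0\}$.

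One point deserves a bit more care.  You assert that under~($*$) ``$g$ never vanishes identically on a boundary divisor of $\widetilde{X_\Sigma}$,'' but your argument only checks the $T$-divisors of $X_\Sigma$, not the exceptional divisors created by the blow-ups $\pi:\widetilde{X_\Sigma}\to X_\Sigma$.  This is easy to fill in: under~($*$) the indeterminacy points of $f$ on $X_\Sigma$ lie only at $\overline{P^{-1}(0)}\cap T_\tau$ for type~II faces (where $d_f^u<0$), and in suitable local coordinates $g$ has the monomial form $\zeta_2/\zeta_1^m$ with $m>0$; a direct check shows every exceptional divisor in the resolution of such a point carries $\mathrm{ord}(g)\le 0$, so none lies in $g^{-1}(0)$.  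With this addition your support claim for $\phi_{h\circ g}(\iota_!\CC_{\CC^2\setminus Q^{-1}(0)})$ at $s_0=0$ is fully justified, and the local estimate $-\mu_q-(m_q-1)<0$ from Theorem~\ref{main-3} then gives $E_f(0)>0$ as you claim.
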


Similarly, also in higher dimensions $n \geq 3$ we 
obtain results similar to the ones in \cite{T} 
and \cite{Zaharia}. 
We leave their precise formulations to the readers. 
If $Q(z)=1$ and $f(z)=P(z)$ 
is a polynomial which is non-degenerate 
(at infinity) and convenient, then by a result of
Broughton \cite{Broughton} 
the polynomial map $f: \CC^n \rightarrow \CC$
is tame at infinity and
\begin{align*}
\Bif_f = f( {\rm Sing} f).
\end{align*}
However, for rational functions $f(z)= \frac{P(z)}{Q(z)}$,
by Theorems \ref{MTM} and \ref{main-3}, 
even if $P(z)$ and $Q(z)$ are convenient 
there might be some type I faces of $\Nt(f)$ and 
hence we do not have the equality 
$\Bif_f = f( {\rm Sing} f)$ in general.

For the value $0$, let us consider the following example.

\begin{ex}\label{ex1}{\rm   Let $f=
\frac{x^2+y}{x+y}.$ It is easy to check that
$f$ is non-degenerate in the sense of Definition \ref{nondeg}.
Let us consider the value $0\in \CC$. For a small disc
$D\subset \CC$ centered at it, we have
$$f^{-1}(D)=\left\{(x, \frac{x^2-tx}{1-t})\ | \ x
\in \CC\setminus \{0, 1\}, t\in D\right\}.$$
It is easy to check that the restriction map
$f: f^{-1}(D)\to D$ is a  trivial fibration.
This means $0\notin \Bif_f.$

Moreover, by \cite[Theorem 1.2]{Thang} we have:
$$\Bif_f= \Bif_{\infty}\cup f(\Sing f)\cup \mathrm{K}_1(f),$$
see \cite[Definition 2.2]{Thang} for the
definition of the set $\mathrm{K}_1(f)$, while
$\Bif_{\infty}(f)$ is the set of critical
value at infinity of $f$. One can easily
check that in this example $f(\Sing f)
= \mathrm{K}_1(f)=\emptyset$. Therefore $\Bif_f=\Bif_{\infty}(f)$.
Let us consider the polynomial $$g_t(x,y):= x^2+y- t(x+y)$$
 and $\delta(y,t)$ to be the discriminant
of $g_t(x,y)$ with respect to the variable $x$. Then
$$\delta(y,t)=4(1-t)y-t^2.$$
Hence by \cite[Corollary 3.7]{Thang} we
get $\Bif_f=\Bif_{\infty}(f)=\{1\}.$

On the other hand, for the set on the
right hand side of the inclusion
(\ref{th11inc}) in Theorem~\ref{MTM},
the only non-empty set among those
of $\bigcup_{\gamma\in
\mathscr{F}_{\mathrm{I}}}
f_{\gamma}(\Sing{f_{\gamma}})$ comes
from the face function $\frac{y}{y}$ which
again provides us the value $1.$ 
}
\end{ex}

Regarding the set $\CF_f$, we will see from
the example below that
in general $\CF_f$ is not a subset of $\Bif_f$.

\begin{ex}{\rm
Let $f:= \frac{x+y}{x+2y}$. Then
$\CF_f=\{1/2, 1\}$. For any small
neighborhood $D$ of $1/2$ (such as
$D$ contains $1/2$ and does not contain $1$), we have
$f^{-1}(D)= \{( - \frac{1-2t}{1-t}y,y) :
y\in \CC^{*}\}$. Hence the restriction
$f: f^{-1}(D)\to D$ is a locally trivial
fibration. This means $1/2\notin \Bif_f$.
Similarly $1\notin \Bif_f$.
}
\end{ex}

\subsection*{Acknowledgments}
We thank the anonymous referee for his/her helpful comments.
The first author is funded by Mathematics Development Program - Vietnam Ministry of Education and Training under grant number B2020 - SPH - 03 CTTH.
The second author is supported by JSPS KAKENHI Grant Number 20J00922.

\bibliography{refrat}

\end{document}